\newenvironment{amatrix}
  {\left\langle\begin{matrix}}
  {\end{matrix}\right\rangle}
\newenvironment{smallamatrix}
  {\left\langle\begin{smallmatrix}}
  {\end{smallmatrix}\right\rangle}
\DeclareMathOperator{\GL}{GL}
\newcommand*{\gaussbinom}[3]{\genfrac{[}{]}{0pt}{}{#1}{#2}_{#3}}
\DeclareMathOperator{\cl}{cl}
\DeclareMathOperator{\MC}{MC}
\DeclareMathOperator{\MCS}{MCS}
\DeclareMathOperator{\enc}{enc}
\newcommand{\qMat}{q\mathchar`-\mathbf{Mat}^{\ncong}}
\newcommand{\INPUT}{\State \textbf{INPUT:} }
\newcommand{\OUTPUT}{\State \textbf{OUTPUT:} }
\newcommand{\qdiff}[3]{\Delta_{#3}(#1, #2)}
\theoremstyle{definition}
\newtheorem{dfn}{Definition}[section]
\newtheorem{rem}[dfn]{Remark}
\newtheorem*{note*}{Notation}
\newtheorem{eg}[dfn]{Exapmle}
\theoremstyle{plain}
\newtheorem{thm}[dfn]{Theorem}
\newtheorem{lem}[dfn]{Lemma}
\newtheorem{prop}[dfn]{Proposition}
\newtheorem{cor}[dfn]{Collorary}
\setlist[enumerate]{leftmargin=15mm}
\renewcommand{\thealgorithm}{\Alph{algorithm}}
\newenvironment{breakablealgorithm}
  {\begin{center}
     \refstepcounter{algorithm}
     \centerline{\rule{\linewidth}{0.8pt}} \kern-5pt
     \vspace{2pt}
     \renewcommand{\caption}[2][\relax]{{\raggedright\textbf{Algorithm~\thealgorithm} ##2\par}\ifx\relax##1\relax
       \else
         \addcontentsline{loa}{algorithm}{\protect\numberline{\thealgorithm}##1}\fi
       \vspace{-9pt}
       \kern1pt \centerline{\rule{\linewidth}{0.2pt}} \kern-5pt
       \vspace{3pt}
     }
  }
  {
    \vspace{-2pt}
    \kern-5pt \centerline{\rule{\linewidth}{0.2pt}}
   \end{center}
  }
\def\thesis#1{\def\@thesis{#1}}
\def\studentid#1{\def\@studentid{#1}}
\def\university#1{\def\@university{#1}}
\def\school#1{\def\@school{#1}}
\def\department#1{\def\@department{#1}}
\def\course#1{\def\@course{#1}}
\def\supervisor#1{\def\@supervisor{#1}}
\def\address#1{\def\@address{#1}}
\def\email#1{\def\@email{#1}}
\def\university#1{\def\@university{#1}}
\def\affiliation#1{\def\@affiliation{#1}} 
\def\@maketitle{
  \thispagestyle{empty}
  \begin{flushright}
  \end{flushright}
  \begin{center}
    \vspace{5mm}
    {\LARGE \@title} \\ \vspace{5mm}
    {\@author}\\
    \vspace{5mm}
    {$^{1}$Research and Education Institute for Semiconductors and Informatics,\\
  Kumamoto University,\\
  2-39-1, Kurokami, Kumamoto 860-8555, Japan,}\\
    \vspace{5mm}
    {$^{2, 3}$Department of Semiconductor, Computer Science and Applied Mathematics
  Kumamoto University,\\
  2-39-1, Kurokami, Kumamoto 860-8555, Japan}\\
  \vspace{5mm}
  {\@email}\\
  \vspace{5mm}
  {\@date}\\ \end{center}
}
\title{On the one-dimensional extensions of $q$-matroids}
\email{\texttt{$^{1}$imamura.math.engr@gmail.com},\\
\texttt{$^{2}$k.shinya.math.eng@gmail.com}, \\
\texttt{$^{3}$keisuke@kumamoto-u.ac.jp}.}
\author{Koji Imamura$^1$ \quad Shinya Kawabuchi$^{2, *}$ \quad Keisuke Shiromoto$^{3}$}
\date{\today}
\begin{document}

  \maketitle
  \vspace{5mm}

  \abstract
In this paper we introduce a $q$-analogue of the single-element extensions of matroids for $q$-matroids, which we call one-dimensional extensions.
To enumerate such extensions, we define a $q$-analogue of modular cuts and define a certain function which we call a modular cut selector.
It assigns each newly appearing one-dimensional subspace to a modular cut. 
By using these notion, we prove the one-to-one correspondence between the one-dimensional extensions and the modular cut selectors.
Furthermore, we define the canonnical representatives of the isomorphic class of the $q$-matroids, which enable us to enumerate non-isomorphic $q$-matroids without the paiwise isomorphism testing.
As an application, we develop a classification algorithm for $q$-matroids, 
and classify all the $q$-matroids on ground spaces over $\mathbb{F}_2$ and $\mathbb{F}_3$ of dimension $4$ and $5$ respectively. 
We also determine some $5$-dimensional $q$-matroids related to the $q$-Fano plane, 
which is the $q$-analogue of the Fano plane, over $\mathbb{F}_2$.   \vspace{5mm}

  \noindent\textit{Keywords:} $q$-analogue; matroid; single-element extension; modular cut; $q$-Fano plane
 
  \section{Introduction} \label{Introduction}
  Matroid theory is a one of the study of combinatorial structures which abstract the notion of linear independence in linear algebra.
Typically, matroids are constructed from various combinatorial and algebraic structures such as graphs and matrices \cite{Oxley, Welsh}.
One of the central topics in matroid theory is the enumeration and classification problems
which have applications to verification for conjectures related to matroids,
such as the points-lines-planes conjecture by Welsh and Seymour \cite{Seymour, MEIG} and
Terao's free conjecture in the theory of hyperplane arrangements \cite{GRSMA}.
In \cite{MNE}, matroids of size $n \leq 9$ have been classified for all possible ranks.
In general, it is quite challenging to classify matroids of an arbitrary rank when $n > 9$,
and so partial results are obtained in \cite{LSAM12, CCG, MEIG}, where matroids of size $n$ and rank $k$
have been classified when $n \leq 12$ and $k \leq 3$, and in the case of $(n, k) = (10, 4)$.

A common approach for enumerating classical matroids relies on the enumeration of \emph{single-element extensions}
which are matroids obtained by adding a new element to a given matroid.
It is known that there is a bijection between certain subcollections of flats of a matroid, known as a \emph{modular cut}, and its single-element extensions.
For more detailed enumeration algorithms of matroids, see \cite{CCG, MNE, MEIG}.

Certain combinatorial structures and their properties admit a $q$-analogue which extends the concept from finite sets to finite vector spaces.
One of the most typical examples to grasp the notion of $q$-analogue is the Gaussian binomial coefficients, or the $q$-binomial coefficients.
Given two non-negative integers $k$ and $n$, they count the number of the $k$-dimensional subspaces of an $n$-dimensional space over a field with $q$ elements
as the ordinal binomial coefficients count the number of $k$-subsets of an $n$-set.
For more details of $q$-analogue, see, for instance, \cite[Chapter 1]{Stanley}.
The notion of $q$-matroids is a $q$-analogue of matroids.
Its study originated in Crapo \cite{CrapoPhd} was re-found by Jurrius and Pellikaan \cite{DQM},
where they demonstrated that a certain class of rank-metric codes, called \emph{Gabidulin codes}, induces $q$-matroids.

$q$-Matroids also have applications to $q$-Steiner systems which are a $q$-analogue of Steiner systems.
One of the major topics in the study of $q$-Steiner systems is the question about the existence of a particular $q$-Steiner system, known as a $q$-Fano plane.

Byrne \textit{et al.} showed that $q$-Steiner systems induce $q$-matroids \cite{CNMD}.
Consequently, the classification of $q$-matroids of dimension $7$ and rank $3$ reveals whether the $q$-Fano plane exists or not.
The classification of $q$-matroids has been completed up to dimension $3$ (see the appendix of \cite{DSOM});
however, no systematic enumeration methods for $q$-matroids are known.
In this paper, we propose such enumeration for them.

In Section \ref{Preliminaries}, we give an overview of the single element extensions of matroids and fundamental properties of $q$-matroids.
In Section \ref{ODEQM}, 
we define one-dimensional extensions and modular cuts as the $q$-analogues of those of classical matroids.
We also define a certain class of functions, which we call modular cut selectors, 
and prove that they are equivalent to one-dimensional extensions.
In Section \ref{Results} we develop a classification algorithm for $q$-matroids using modular cut selectors. 
We also introduce the canonical representaives of the isomorphic class of the $q$-matroids 
which is a $q$-analogue of canonical matroid in \cite{MEIG}.
In Section \ref{q-FanoPlane}, we present some $q$-matroids related to the $q$-Fano plane over $\mathbb{F}_2$. 
  \section{Preliminaries} \label{Preliminaries}
  \subsection{Matroids} \label{Matroids}

Hereafter, for a function $f$ on a set $X$ and a subset $Y$ of $X$, we denote by $f|_{Y}$ the restriction of $f$,
that is, $f|_{Y}(x) \coloneqq f(x)$ for all $x\in Y$.
Given a set $X$, we also denote by $2^X$ the collection of all subsets of $X$.

\begin{dfn} \label{def:matroid}
  Let $E$ be a finite set and $r$ be a non-negative integer function on $2^E$.
  The ordered pair $M = (E, r)$ is a \emph{matroid} if the following properties hold:
  \begin{enumerate}[label=\normalfont{(r\arabic*)}]
    \item if $X \in 2^E$, then $0 \leq r(X) \leq |X|$;
    \item if $X, Y \in 2^E$ satisfy $X \subseteq Y$, then $r(X) \leq r(Y)$;
    \item if $X, Y \in 2^E$, then $r(X \cup Y) + r(X \cap Y) \leq r(X) + r(Y)$.
  \end{enumerate}
  Then, we call $E$ and $|E|$ the \emph{ground set} and the \emph{size} of $M$, respectively.
  The function $r$ is called the \emph{rank function} of $M$,
  and $r(X)$ is referred to as the \emph{rank} of $X$.
  Moreover, $r(E)$ is the \emph{rank} of $M$.
\end{dfn}

We introduce some terminology for matroids, following \cite{Oxley}.

\begin{dfn} \label{def:restriction}
  For any matroid $M=(E, r)$, the \emph{restriction} of $M$ to a subset $E' \subseteq E$, denoted by $M|E'$, is given by $(E', r|_{2^{E'}})$.
\end{dfn}

\begin{dfn} \label{def:extension}
  A matroid $N = (E_N, r_N)$ is called an \emph{extension} of a matroid $M = (E_M, r_M)$ if $N|E_M = M$.
  In particular, if $|E_N| = |E_M|+1$, then $N$ is called a \emph{single-element extension} of $M$.
\end{dfn}

\begin{dfn} \label{def:flat}
  Let $M=(E, r)$ be a matroid.
  A subset $F$ of $E$ is called a \emph{flat} of $M$ if
  $r(F \cup \{e\}) = r(F) + 1$ for all $e  \in E\setminus F$.
  We denote by $\mathcal{F}_M$ the collection of all flats of $M$.
\end{dfn}

\begin{dfn} \label{def:modular_pair}
  Let $M=(E, r)$ be a matroid. A pair $(F_1, F_2)$ of two flats of $M$ is called a \emph{modular pair} of $M$ if
  \begin{equation*}
    r(F_1 \cup F_2) + r(F_1 \cap F_2) \leq r(F_1) + r(F_2).
  \end{equation*}
\end{dfn}

\begin{dfn} \label{def:closure_operator}
  For any matroid $M=(E, r)$, the function $\cl_M \colon 2^E \to 2^E$ defined by
  \begin{equation*}
    \cl_M(X) \coloneqq \{e \in E \mid r(X \cup \{e\}) = r(X)\}
  \end{equation*}
  for all $X \in 2^E$ is called the \emph{closure operator} of $M$.
\end{dfn}

In classical matroid theory, the classification problem of matroids involves the enumeration of non-isomorphic matroids of given size $n$ and rank $k$.
The classification problem goes back to the study by Blackburn, Crapo, and Higgs \cite{CCG} in 1973,
and has been solved up to $n=9$ in \cite{MNE}, whereas partial classifications are conducted for $n=10, 11$, and $12$ in \cite{MEIG, LSAM12}.

Notably, the approaches in \cite{SEEM, MNE, MEIG} involve enumerating single-element extensions of non-isomorphic matroids of size $n-1$
to classify non-isomorphic matroids of size $n$, where they adopt different approaches to the isomorphism test.
Here, we focus on the common method of constructing single-element extensions in \cite{SEEM, MNE, MEIG}.

First, we describe the single-element extensions from the perspective of flats (see also \cite[Section 7.2]{Oxley}).
Suppose that $N=(E_N, r_N)$ is a single-element extension of a matroid $M=(E_M, r_M)$ by a new element $e$.
Then, the collection of the flats in $M$ is equal to $\{F \cap E_M \mid F \in \mathcal{F}_N\}$.
Conversely, all flats of $N$ are written as either $F$ or $F \cup \{ e \}$ for some flat $F$ of $M$.
The concept of modular cuts below is based on the observation that every flat $F$ of $M$ with $r_N(F \cup e) = r_N(F)$ satisfies that $F \cup \{e\}$ is a flat of $N$.

\begin{dfn}[\cite{SEEM, Oxley}] \label{def_modular_cut}
  For a matroid $M=(E_M, r_M)$, a subset $\mathcal{M}$ of the family of flats $\mathcal{F}_M$ is called a \emph{modular cut}
  if it satisfies the following conditions:
  \begin{enumerate}[label=\normalfont{(m\arabic*)}]
    \item if $F \in \mathcal{M}$ and $F'$ is a flat containing $F$, then $F' \in \mathcal{M}$;
    \item if $F_1, F_2 \in \mathcal{M}$ and $(F_1, F_2)$ is a modular pair, then $F_1 \cap F_2 \in \mathcal{M}$.
  \end{enumerate}
\end{dfn}

\begin{thm}[\cite{SEEM, Oxley}]
  If $N = (E_N, r_N)$ is a single-element extension of a matroid $M=(E_M, r_M)$, where $E_N = E_M \cup \{e\}$,
  then the following set forms a modular cut of $M$:
  \begin{equation*}
    \{F \in \mathcal{F}_M \mid F \cup \{e\} \in \mathcal{F}_N \text{ and } r_N(F \cup e) = r_N(F)\}.
  \end{equation*}
\end{thm}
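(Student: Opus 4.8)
Write $\mathcal{M} \coloneqq \{F \in \mathcal{F}_M \mid F \cup \{e\} \in \mathcal{F}_N \text{ and } r_N(F \cup e) = r_N(F)\}$ for the collection in the statement. Since $N|E_M = M$, the rank $r_N$ restricts to $r_M$ on $2^{E_M}$, and for every $X \subseteq E_M$ the value $r_N(X \cup e)$ equals either $r_M(X)$ or $r_M(X)+1$, since adjoining one element raises the rank by at most one (by (r1) and (r3)). The plan is first to record a simplifying observation: if $F \in \mathcal{F}_M$ satisfies $r_N(F \cup e) = r_N(F)$, then $F \cup \{e\}$ is automatically a flat of $N$. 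Indeed, for any $g \in E_N \setminus (F \cup \{e\}) = E_M \setminus F$ one has $r_N(F \cup g) = r_M(F \cup g) = r_M(F)+1$ because $F$ is a flat of $M$; combining $r_N(F \cup \{e,g\}) \ge r_N(F \cup g) = r_N(F \cup e)+1$ with the at-most-one bound $r_N(F \cup \{e,g\}) \le r_N(F \cup e)+1$ forces $r_N((F \cup e) \cup g) = r_N(F \cup e)+1$, so $F \cup \{e\}$ is a flat. Hence the flat condition defining $\mathcal{M}$ is redundant, and $\mathcal{M} = \{F \in \mathcal{F}_M \mid e \in \cl_N(F)\}$, reducing each verification to a statement about ranks in $N$.

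For (m1), suppose $F \in \mathcal{M}$ and $F' \in \mathcal{F}_M$ with $F \subseteq F'$. I would apply submodularity (r3) in $N$ to the sets $F'$ and $F \cup \{e\}$. Since $F \subseteq F' \subseteq E_M$ and $e \notin E_M$, we have $F' \cup (F \cup \{e\}) = F' \cup \{e\}$ and $F' \cap (F \cup \{e\}) = F$, so (r3) yields
\begin{equation*}
r_N(F' \cup e) + r_N(F) \le r_N(F') + r_N(F \cup e) = r_N(F') + r_N(F).
\end{equation*}
Thus $r_N(F' \cup e) \le r_N(F')$, and the reverse inequality always holds, so $r_N(F' \cup e) = r_N(F')$; by the observation above $F' \cup \{e\} \in \mathcal{F}_N$, whence $F' \in \mathcal{M}$.

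The heart of the argument is (m2). Let $F_1, F_2 \in \mathcal{M}$ form a modular pair, and set $I = F_1 \cap F_2$ and $U = F_1 \cup F_2$; recall $I$ is again a flat of $M$. I would apply (r3) in $N$ to the \emph{lifted} sets $F_1 \cup \{e\}$ and $F_2 \cup \{e\}$. Using the distributive identities $(F_1 \cup \{e\}) \cap (F_2 \cup \{e\}) = (F_1 \cap F_2) \cup \{e\} = I \cup \{e\}$ and $(F_1 \cup \{e\}) \cup (F_2 \cup \{e\}) = U \cup \{e\}$, together with $r_N(F_i \cup e) = r_N(F_i) = r_M(F_i)$, this gives
\begin{equation*}
r_N(U \cup e) + r_N(I \cup e) \le r_M(F_1) + r_M(F_2).
\end{equation*}
The modular pair property supplies the rank equality $r_M(F_1) + r_M(F_2) = r_M(U) + r_M(I)$; substituting it and bounding $r_N(U \cup e) \ge r_N(U) = r_M(U)$ leaves $r_N(I \cup e) \le r_M(I) = r_N(I)$. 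Since the reverse inequality is automatic, $r_N(I \cup e) = r_N(I)$, so $I \in \mathcal{M}$.

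The main obstacle is the step in (m2): one must pass from $F_1, F_2$ to $F_1 \cup \{e\}, F_2 \cup \{e\}$ before applying submodularity and correctly identify their meet as $I \cup \{e\}$, and one must feed in the modular pair hypothesis in the form of the rank equality $r_M(U) + r_M(I) = r_M(F_1)+r_M(F_2)$. This equality is exactly what distinguishes a modular pair from an arbitrary pair of flats: the inequality (r3) always holds, and modularity supplies the reverse, so this is the one place where the modular pair assumption is genuinely used. The remaining ingredients — the distributive set identities and the at-most-one rank bounds — are routine.
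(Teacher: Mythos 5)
Your proof is correct and is exactly the standard argument the paper defers to by citing \cite{SEEM, Oxley} (it gives no in-text proof of this theorem): the observation that the flatness of $F \cup \{e\}$ follows automatically from the rank condition, upward closure via submodularity, and, for (m2), submodularity applied to the lifted flats with the identity $(F_1 \cup \{e\}) \cap (F_2 \cup \{e\}) = (F_1 \cap F_2) \cup \{e\}$ — the same route the paper itself adapts for the $q$-analogue in Lemma~\ref{lem:modularcut_1}, where that identity becomes $(F_1 + e) \cap (F_2 + e) = (F_1 \cap F_2) + e$. One minor point: you correctly use the modular-pair hypothesis as the rank \emph{equality} $r_M(F_1 \cup F_2) + r_M(F_1 \cap F_2) = r_M(F_1) + r_M(F_2)$, which is the intended definition (the ``$\leq$'' in the paper's Definition~\ref{def:modular_pair} is a typo, being vacuous by submodularity, and its $q$-analogue Definition~\ref{def:q-mat_modular_pair} indeed states equality).
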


Conversely, by reversing the construction in the theorem above, each modular cut gives rise to a unique extension.

\begin{thm}[\cite{SEEM, Oxley}]
  Let $M=(E_M, r_M)$ be a matroid, $\mathcal{M}$ be a modular cut of $M$ and $E_N = E_M \cup \{e\}$. Define a function $r' \colon 2^{E_N} \to \mathbb{Z}_{\geq 0}$ for all $X \in 2^{E_M}$ as follows:
  \begin{align*}
    r'(X) = r_M(X), \quad \textit{and} \quad
    r'(X+e) =
    \begin{cases}
      r_M(X)   & \text{if } \cl_M(X) \in \mathcal{M},\\
      r_M(X)+1 & \text{if } \cl_M(X) \notin \mathcal{M}.
    \end{cases}
  \end{align*}
  Then, $(E_N, r')$ is a single-element extension of $M$.
\end{thm}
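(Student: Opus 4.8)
The plan is to verify that $r'$ satisfies the three rank axioms (r1)--(r3); the restriction identity $(E_N, r')|E_M = M$ and the size condition $|E_N| = |E_M|+1$ are immediate from the definition $r'|_{2^{E_M}} = r_M$. Throughout I write $X+e$ for $X \cup \{e\}$ with $X \subseteq E_M$, and I record once and for all three elementary facts that I will use repeatedly: the intersection of two flats is again a flat; if $F \subsetneq F'$ are flats then $r_M(F) < r_M(F')$ (pick $x \in F'\setminus F$, so $r_M(F)+1 = r_M(F\cup\{x\}) \le r_M(F')$); and the closure is monotone and rank-preserving, so $\cl_M(X) \subseteq \cl_M(Y)$ and $r_M(\cl_M(X)) = r_M(X)$ whenever $X \subseteq Y$.

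First I would dispose of (r1) and (r2). Boundedness (r1) is routine, since $r'(X+e) \le r_M(X)+1 \le |X|+1 = |X+e|$ and the lower bound is clear. For monotonicity (r2) the only nontrivial case is $X_1 + e \subseteq X_2 + e$; along the chain $\cl_M(X_1) \subseteq \cl_M(X_2)$ the upward-closure axiom (m1) controls the behaviour of the membership in $\mathcal{M}$, and the only delicate subcase, where $\cl_M(X_1) \notin \mathcal{M}$ but $\cl_M(X_2) \in \mathcal{M}$, is settled by the strict-rank fact, which gives $r_M(X_1)+1 \le r_M(X_2)$. The remaining mixed case, where only the larger set contains $e$, is immediate from $r'(X_2+e) \ge r_M(X_2)$.

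The substance of the proof is submodularity (r3), which I would organise by how many of the two input sets contain $e$. If neither contains $e$ the claim reduces to (r3) for $M$. If exactly one does, say the sets are $X_1+e$ and $X_2$, then after cancellation the inequality becomes $r_M(X_1\cup X_2) + r_M(X_1\cap X_2) + a \le r_M(X_1) + r_M(X_2) + b$, where $a,b \in \{0,1\}$ record whether $\cl_M(X_1\cup X_2)$ and $\cl_M(X_1)$ lie outside $\mathcal{M}$; here (m1) alone forces $a \le b$, and submodularity of $r_M$ finishes it. The genuinely hard case is when both sets contain $e$, i.e. $X_1+e$ and $X_2+e$. Writing $A = X_1\cup X_2$, $B = X_1\cap X_2$ and introducing the four indicators $a,b,c_1,c_2 \in \{0,1\}$ for whether the closures of $A, B, X_1, X_2$ lie outside $\mathcal{M}$, the target inequality becomes $a+b \le c_1+c_2+\delta$, where $\delta = r_M(X_1)+r_M(X_2)-r_M(A)-r_M(B) \ge 0$ is the submodular defect of $r_M$. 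Using (m1) one obtains the indicator chain $a \le c_i \le b$, which instantly settles every configuration except the critical one $a=0$, $c_1=c_2=0$, $b=1$, where one must show $\delta \ge 1$.

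This critical case is where the modular-pair axiom (m2) enters, and it is the step I expect to be the main obstacle. I would argue by contradiction: assuming $\delta = 0$, set $F_i = \cl_M(X_i)$. Monotonicity of closure gives $r_M(F_1\cup F_2) = r_M(A)$ and $r_M(F_1\cap F_2) \ge r_M(B)$, while submodularity applied to $F_1, F_2$ together with $\delta = 0$ forces $r_M(F_1\cap F_2) = r_M(B)$, so that the submodular inequality for $F_1, F_2$ holds with equality and $(F_1,F_2)$ is a modular pair. Since $c_1 = c_2 = 0$ means $F_1, F_2 \in \mathcal{M}$, axiom (m2) yields $F_1\cap F_2 \in \mathcal{M}$. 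Finally $\cl_M(B) \subseteq F_1\cap F_2$ is a containment of flats of equal rank, so the strict-rank fact forces $\cl_M(B) = F_1\cap F_2 \in \mathcal{M}$, contradicting $b=1$. Hence $\delta \ge 1$, which completes (r3) and the proof.
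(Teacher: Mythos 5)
Your proof is correct and takes essentially the approach of the proof the paper relies on: the paper states this theorem without proof, citing \cite{SEEM, Oxley}, and your axiom-by-axiom verification --- the indicator chain $a \le c_i \le b$ obtained from (m1), and the critical case $c_1 = c_2 = 0$, $b = 1$ resolved by showing $(\cl_M(X_1), \cl_M(X_2))$ is a modular pair and invoking (m2) --- is precisely the standard argument of \cite[Theorem 7.2.3]{Oxley}, whose $q$-analogue the paper itself follows in Lemmas~\ref{modR1R2} and~\ref{modR3}. No gaps.
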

   \subsection{$q$-Matroids} \label{q-Matroids}
  Throughout this paper, fix a prime power $q$ and a non-negative integer $n$.
We denote by $\mathbb{F}_q$ the finite field with $q$ elements.
$0$ represents the $0$-dimensional vector space over $\mathbb{F}_q$.
If $X$ is a vector space over $\mathbb{F}_q$, we denote by $\mathcal{L}(X)$ the set of all subspaces of $X$,
by $[X]_q$ the set of all one-dimensional subspaces of $X$,
and define $\qdiff{B}{A}{q} \coloneqq [B]_q \setminus [A]_q$ for all $A, B \in \mathcal{L}(X)$.
We also denote by $\gaussbinom{X}{k}{q}$ the collection of all $k$-dimensional subspaces in $X$ and 
by $\gaussbinom{n}{k}{q}$ the number of $k$-dimensional subspaces in an $n$-dimensional vector space over $\mathbb{F}_q$.
If $A$ is a subspace of a vector space $B$, we write $A \leq B$. 
For any function $f$ on a vector space $E$ over $\mathbb{F}_q$, we
denote by $\hat{f}$ the function on $\mathcal{L}(E)$ which naturally extends $f$,
 that is, $\hat{f}(X)$ is the
image of $X$ under $f$ for all $X \in \mathcal{L}(E)$.

\begin{dfn}[\cite{DQM}] \label{def:q-matroid}
  Let $E$ be a finite-dimensional vector space over $\mathbb{F}_q$ and $r$ be a non-negative integer function on $\mathcal{L}(E)$.
  The ordered pair $M = (E, r)$ is a $q$-\emph{matroid} if the following properties hold:
  \begin{enumerate}[label=\normalfont{(R\arabic*)}]
      \item if $X \in \mathcal{L}(E)$, then $0 \leq r(X) \leq \dim(X)$; \label{q-mat_boundedness}
      \item if $X, Y \in \mathcal{L}(E)$ satisfy $X \leq Y$, then $r(X) \leq r(Y)$; \label{q-mat_monotonicity}
      \item if $X, Y \in \mathcal{L}(E)$, then $r(X + Y) + r(X \cap Y) \leq r(X) + r(Y)$. \label{q-mat_submodularity}
  \end{enumerate}
We call $E$ and $\dim E$ the \emph{ground space} and the \emph{dimension} of $M$, respectively.
  The function $r$ is called the \emph{rank function} of $M$,
  and $r(X)$ is referred to as the \emph{rank} of $X$.
  Moreover, $r(E)$ is the \emph{rank} of $M$.
\end{dfn}

\begin{eg}[\cite{DQM}]
  Let $E$ be a $n$-dimensional space over $\mathbb{F}_q$ and $k$ be an integer with $0 \leq k \leq n$. We define the function $r:\mathcal{L}(E) \to \mathbb{Z}_{\geq 0}$ by $r(X) \coloneqq \min \{\dim X, k\}$. Then, $M=(E, r)$ be a $q$-matroid, which is called the \emph{uniform $q$-matroid} denoted by $U_{k,n}$.
\end{eg}

\begin{lem} \label{rXAB}
  Let $(E, r)$ be a $q$-matroid. For all $A, B, X \in \mathcal{L}(E)$ and $x \in [E]_q$, the following statements hold.
  \begin{enumerate}[label=\normalfont{(\arabic*)}]
    \item $r(A + x) \leq r(A) + 1$.\label{AxA}
\item If $A \subseteq B$ and $r(A + x) = r(A)$, then $r(B + x) = r(B)$. \label{AxABxB}
    \item If $r(X + A) = r(X + B) = r(X)$, then $r(X + A + B) = r(X)$.\label{XABX}
  \end{enumerate}
\end{lem}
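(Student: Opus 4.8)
The plan is to derive all three statements directly from submodularity \ref{q-mat_submodularity} and monotonicity \ref{q-mat_monotonicity}. In each case the assertion amounts to squeezing the rank of a large subspace between a lower bound supplied by monotonicity and an upper bound supplied by submodularity, so the whole lemma reduces to choosing the right pair of subspaces to feed into \ref{q-mat_submodularity}.

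For \ref{AxA}, I would apply \ref{q-mat_submodularity} to the pair $A$ and $x$, obtaining $r(A+x) + r(A \cap x) \leq r(A) + r(x)$. Since $x$ is one-dimensional, \ref{q-mat_boundedness} gives $r(x) \leq 1$, while $r(A \cap x) \geq 0$, so $r(A+x) \leq r(A) + 1$ follows at once. For \ref{XABX}, I would apply \ref{q-mat_submodularity} to the subspaces $X+A$ and $X+B$. Their sum is $X+A+B$, and their intersection contains $X$, so monotonicity gives $r\bigl((X+A)\cap(X+B)\bigr) \geq r(X)$. Combining this with the hypotheses $r(X+A) = r(X+B) = r(X)$ yields $r(X+A+B) + r(X) \leq 2r(X)$, hence $r(X+A+B) \leq r(X)$; the reverse inequality is immediate from monotonicity since $X \subseteq X+A+B$, so equality holds.

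The most delicate part is \ref{AxABxB}, where the success of the argument hinges on pairing the correct subspaces. The idea is to apply \ref{q-mat_submodularity} to $B$ and $A+x$. Because $A \subseteq B$, the sum $B + (A+x)$ equals $B+x$, while the intersection satisfies the chain $A \subseteq B \cap (A+x) \subseteq A+x$. The hard part will be controlling the rank of this intersection: using monotonicity along the chain together with the hypothesis $r(A+x) = r(A)$, I would squeeze $r(A) \leq r\bigl(B \cap (A+x)\bigr) \leq r(A+x) = r(A)$, which forces $r\bigl(B \cap (A+x)\bigr) = r(A)$. Substituting this back into the submodular inequality gives $r(B+x) + r(A) \leq r(B) + r(A)$, so $r(B+x) \leq r(B)$, and the reverse inequality again comes from monotonicity, completing the proof.

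I expect the routine cases \ref{AxA} and \ref{XABX} to be essentially one-line applications of the axioms; the only genuine subtlety is in \ref{AxABxB}, namely recognizing that the intersection $B \cap (A+x)$ is trapped between $A$ and $A+x$ and that the equal-rank hypothesis collapses this sandwich. Once that observation is made, the computation is again a direct substitution into \ref{q-mat_submodularity}.
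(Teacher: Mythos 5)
Your proposal is correct, and all three squeezing arguments go through as written. For part (3) your argument is literally the paper's proof: the authors also apply submodularity to the pair $X+A$, $X+B$, bound the intersection below by $r(X)$ via monotonicity since $X \leq (X+A)\cap(X+B)$, and conclude $r(X+A+B) \leq r(X)$ with the reverse inequality from monotonicity. Where you genuinely diverge is in parts (1) and (2): the paper does not prove these at all, instead citing Lemma 2.3 of Jurrius--Pellikaan for (1) and noting that (2) is the contrapositive of Lemma 3.2 of Byrne et al. Your direct derivations are sound substitutes. In (1), submodularity applied to $A$ and $x$ together with $r(x)\leq 1$ from (R1) and $r(A\cap x)\geq 0$ gives the bound immediately (note $A \cap x$ is either $0$ or $x$, but nonnegativity is all you need). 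In (2), the key observation that $B \cap (A+x)$ is sandwiched as $A \leq B\cap(A+x) \leq A+x$, so the hypothesis $r(A+x)=r(A)$ collapses its rank to $r(A)$, is exactly the standard local-to-global argument underlying the cited lemma; combined with $B+(A+x)=B+x$ (using $A \leq B$), submodularity yields $r(B+x)\leq r(B)$ and monotonicity the converse. The net effect is that your version is self-contained where the paper's is not, at the cost of a few extra lines; there is no gap.
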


\begin{proof}
  For \ref{AxA}, see \cite[Lemmas 2.3]{DQM}.
  The statement \ref{AxABxB} is the contraposition of \cite[Lemma 3.2]{CNMD}.
  We check \ref{XABX}.
  Since $X \leq (X+A) \cap (X+B)$, it follows from \ref{q-mat_monotonicity} and \ref{q-mat_submodularity} that
  \begin{align*}
    r(X + A + B) + r(X) &\leq r(X + A + X + B) + r((X+A) \cap (X+B))\\
                        &\leq r(X + A) + r(X + B)\\
                        &= 2r(X).
  \end{align*}
  Therefore, $r(X+A+B) \leq r(X)$ holds, which implies $r(X+A+B) = r(X)$ by \ref{q-mat_monotonicity}. Hence (4) holds.
\end{proof}

\begin{dfn} \label{def:q-mat_closure_operator}
  For any $q$-matroid $M = (E, r)$, the function $\cl_M \colon \mathcal{L}(E) \to \mathcal{L}(E)$ defined by
  \begin{equation*}
    \cl_M(X) \coloneqq \sum \{x \in [E]_q \mid r(X + x) = r(X)\}
  \end{equation*}
  for all $X \in \mathcal{L}(E)$ is called the \emph{closure operator} of $M$, and we say $\cl_M(X)$ is the $closure$ of $X$.
\end{dfn}

\begin{prop}[\cite{CNQC}]\label{claxiom}
  Let $M = (E, r)$ be a $q$-matroid. For all $X, Y \in \mathcal{L}(E)$ and $x, y \in [E]_q$, the following properties hold:
  \begin{enumerate}[label=\normalfont{(CL\arabic*)}]
    \item $X \leq \cl_M(X)$; \label{q-mat_cl_increasing}
    \item if $X \leq Y$, then $\cl_M(X) \leq \cl_M(Y)$; \label{q-mat_cl_monotonicity}
    \item $\cl_M(X) = \cl_M(\cl_M(X))$; \label{q-mat_cl_idempotence}
    \item if $y \in \qdiff{\cl_M(X + x)}{\cl_M(X)}{q}$, then $x \in [\cl_M(X + y)]_q$. \label{q-mat_cl_MacLane-Steinitz_exchange}
  \end{enumerate}
\end{prop}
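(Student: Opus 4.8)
The plan is to verify the four axioms in turn, treating the first two as direct consequences of the definition together with Lemma \ref{rXAB}, and reserving the real work for an auxiliary rank identity that drives both \ref{q-mat_cl_idempotence} and \ref{q-mat_cl_MacLane-Steinitz_exchange}. For \ref{q-mat_cl_increasing}, I would observe that every $x \in [X]_q$ satisfies $X + x = X$ and hence $r(X+x) = r(X)$, so each one-dimensional subspace of $X$ occurs among the summands defining $\cl_M(X)$; summing these yields $X \leq \cl_M(X)$. For \ref{q-mat_cl_monotonicity}, assuming $X \leq Y$, I would take any generator $x$ of $\cl_M(X)$, i.e.\ $x \in [E]_q$ with $r(X+x) = r(X)$, and apply Lemma \ref{rXAB}\ref{AxABxB} to conclude $r(Y+x) = r(Y)$, so $x$ is also a generator of $\cl_M(Y)$; since $\cl_M(X)$ is the sum of its generators, $\cl_M(X) \leq \cl_M(Y)$ follows.

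The key auxiliary step is the rank-preservation identity $r(\cl_M(X)) = r(X)$. Writing $\cl_M(X) = X + x_1 + \cdots + x_m$, where $x_1, \ldots, x_m$ enumerate the finitely many one-dimensional subspaces with $r(X + x_i) = r(X)$, I would prove by induction on $j$ that $r(X + x_1 + \cdots + x_j) = r(X)$: the base case is the defining property of $x_1$, and the inductive step applies Lemma \ref{rXAB}\ref{XABX} with $A = x_1 + \cdots + x_{j-1}$ and $B = x_j$. I expect this induction — more precisely, the bookkeeping that lets Lemma \ref{rXAB}\ref{XABX} absorb one new one-dimensional subspace at a time into the rank-$r(X)$ subspace — to be the main obstacle, since it is where the submodular content of axiom \ref{q-mat_submodularity} is actually consumed.

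Given this identity, \ref{q-mat_cl_idempotence} is immediate: \ref{q-mat_cl_increasing} gives $\cl_M(X) \leq \cl_M(\cl_M(X))$, and for the reverse inclusion I would take $y \in [E]_q$ with $r(\cl_M(X) + y) = r(\cl_M(X))$ and combine $X \leq \cl_M(X)$ with monotonicity \ref{q-mat_monotonicity} to get $r(X) \leq r(X+y) \leq r(\cl_M(X)+y) = r(\cl_M(X)) = r(X)$, forcing $r(X+y) = r(X)$ and hence $y \leq \cl_M(X)$.

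Finally, for the exchange property \ref{q-mat_cl_MacLane-Steinitz_exchange} I would argue entirely with ranks. The hypothesis $y \in \qdiff{\cl_M(X+x)}{\cl_M(X)}{q}$ splits into $y \leq \cl_M(X+x)$, which combined with the auxiliary identity and monotonicity gives $r(X+x+y) = r(X+x)$, and $y \not\leq \cl_M(X)$, which by the definition of the closure means $r(X+y) \neq r(X)$ and hence $r(X+y) = r(X)+1$ by Lemma \ref{rXAB}\ref{AxA}. It then suffices to show $r(X+x) = r(X)+1$: were $r(X+x) = r(X)$, then $r(X+x+y) = r(X)$, and monotonicity applied to $X + y \leq X + x + y$ would give $r(X+y) \leq r(X)$, contradicting $r(X+y) = r(X)+1$. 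Thus $r(X+y+x) = r(X+x) = r(X)+1 = r(X+y)$, which is precisely the statement $x \in [\cl_M(X+y)]_q$.
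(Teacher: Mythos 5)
Your proof is correct and complete. Note that the paper does not prove this proposition at all---it is imported from \cite{CNQC} with a citation and no argument---so there is no internal proof to compare against; what you have written is essentially the standard argument from that reference. Each step checks out: \ref{q-mat_cl_increasing} and \ref{q-mat_cl_monotonicity} follow from the definition of $\cl_M$ and Lemma~\ref{rXAB}\ref{AxABxB} exactly as you say; your inductive absorption of the generators $x_1, \dots, x_m$ one at a time via Lemma~\ref{rXAB}\ref{XABX} correctly establishes $r(\cl_M(X)) = r(X)$, and this is indeed where the submodularity \ref{q-mat_submodularity} is consumed; and the rank bookkeeping in \ref{q-mat_cl_idempotence} and \ref{q-mat_cl_MacLane-Steinitz_exchange} is sound, including the contradiction that forces $r(X+x) = r(X)+1$ (the degenerate cases, e.g.\ $x \leq X$, make the hypothesis of \ref{q-mat_cl_MacLane-Steinitz_exchange} vacuous and cause no trouble). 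One remark: your auxiliary identity is precisely the rank statement of Proposition~\ref{rXrclX}, which the paper also quotes from \cite{CNQC} immediately after this proposition and which is logically independent of it, so within this paper you could simply cite that result and shorten the middle of your argument; proving it yourself, as you do, has the merit of making the whole verification self-contained from the rank axioms and Lemma~\ref{rXAB} alone.
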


\begin{prop} \label{proprcl}
  Let $M = (E, r)$ be a $q$-matroid. Then, for all $X, Y \in \mathcal{L}(E)$, we have
  \begin{equation*}
    r(X + Y) = r(X + \cl_M(Y)) = r(\cl_M(X) + \cl_M(Y)).
  \end{equation*}
\end{prop}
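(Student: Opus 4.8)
The plan is to reduce the entire proposition to the single identity
\[
  r(A + \cl_M(B)) = r(A + B) \quad \text{for all } A, B \in \mathcal{L}(E),
\]
which I will refer to as $(\ast)$. Granting $(\ast)$, the first equality $r(X+Y)=r(X+\cl_M(Y))$ is exactly the case $A=X$, $B=Y$. For the second equality, I would apply $(\ast)$ with $A=\cl_M(Y)$ and $B=X$ to get $r(\cl_M(Y)+X)=r(\cl_M(Y)+\cl_M(X))$; since the subspace sum is commutative, this reads $r(X+\cl_M(Y))=r(\cl_M(X)+\cl_M(Y))$, as required. Thus no separate argument is needed for the two equalities once $(\ast)$ is in hand.

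To establish $(\ast)$, one inequality is immediate: from $B \le \cl_M(B)$ (axiom \ref{q-mat_cl_increasing}) and monotonicity \ref{q-mat_monotonicity} we get $r(A+B) \le r(A+\cl_M(B))$. The content is the reverse inequality, and the idea is to build $\cl_M(B)$ from $A+B$ by adjoining its generating lines one at a time without changing the rank. By definition $\cl_M(B)=\sum_{y\in S} y$, where $S=\{y\in [E]_q \mid r(B+y)=r(B)\}$, and $S$ is finite because $E$ is finite-dimensional; fix an enumeration $S=\{y_1,\dots,y_m\}$.

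The key step is an induction on $k$ showing $r\bigl((A+B)+y_1+\cdots+y_k\bigr)=r(A+B)$ for all $0\le k\le m$. The base case $k=0$ is trivial, and for the inductive step I set $Z=(A+B)+y_1+\cdots+y_{k-1}$, so that $r(Z)=r(A+B)$ by the induction hypothesis. Since $y_k\in S$ gives $r(B+y_k)=r(B)$ and $B \le Z$, statement \ref{AxABxB} of Lemma \ref{rXAB} transfers this to $r(Z+y_k)=r(Z)$, which is the claim for $k$. At $k=m$ we obtain $r\bigl((A+B)+\cl_M(B)\bigr)=r(A+B)$, and since $B \le \cl_M(B)$ forces $(A+B)+\cl_M(B)=A+\cl_M(B)$, this is precisely $(\ast)$. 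I expect the only real care to be in the induction: statement \ref{AxABxB} of Lemma \ref{rXAB} must be invoked with the \emph{fixed} space $B$ as the smaller space and the \emph{growing} space $Z\supseteq B$ as the larger one, so that the single-line invariance $r(B+y_k)=r(B)$ propagates up the chain. An alternative pairwise combination via statement \ref{XABX} of Lemma \ref{rXAB} is available but makes the bookkeeping heavier.
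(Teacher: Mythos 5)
Your proposal is correct, and your reduction to the single identity $(\ast)$ together with the symmetric re-application for the second equality matches the paper's closing step (the paper phrases it as replacing $X$ with $\cl_M(X)$ and repeating the argument). Where you genuinely diverge is in the proof of the hard inequality $r(X+Y)\ge r(X+\cl_M(Y))$. The paper gets it in two lines from submodularity: it applies \ref{q-mat_submodularity} to the pair $\bigl(X+Y,\,\cl_M(Y)\bigr)$, replaces $r(\cl_M(Y))$ by $r(Y)$ using Proposition~\ref{rXrclX}, and then cancels via $r((X+Y)\cap\cl_M(Y))\ge r(Y)$, which follows from \ref{q-mat_monotonicity} and \ref{q-mat_cl_increasing}. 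You instead run an elementary induction adjoining the generating lines $y_1,\dots,y_m$ of $\cl_M(B)$ one at a time, propagating the single-line invariance $r(B+y_k)=r(B)$ up to the growing space $Z$ via Lemma~\ref{rXAB}\ref{AxABxB}; your insistence that the \emph{fixed} space $B$ play the role of the smaller space in that lemma is exactly what makes the induction go through, and the endpoint identification $(A+B)+\cl_M(B)=A+\cl_M(B)$ is sound since $B\le\cl_M(B)$ (indeed every line of $B$ lies in your set $S$, so this needs barely even \ref{q-mat_cl_increasing}). The trade-off: the paper's argument is shorter but leans on Proposition~\ref{rXrclX} (rank-preservation of closures, quoted from the $q$-cryptomorphism literature and stated only after this proposition in the text), whereas yours is more self-contained within the paper's logical order, using only the definition of $\cl_M$, the axioms \ref{q-mat_boundedness}--\ref{q-mat_submodularity} via Lemma~\ref{rXAB}\ref{AxABxB}, with submodularity entering only implicitly through that lemma. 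Both proofs are complete; yours trades brevity for a lighter set of prerequisites.
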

\begin{proof}
  By Proposition~\ref{rXrclX}, we have $r(X + Y) = r(\cl_M(X + Y))$. From \ref{q-mat_monotonicity} and \ref{q-mat_cl_increasing},
  it follows that
  \begin{equation*}
    r(X + Y) \leq r(X + \cl_M(Y)).
  \end{equation*}
  Additionally, from Proposition~\ref{rXrclX}, \ref{q-mat_submodularity}, and \ref{q-mat_cl_increasing},
  \begin{align*}
    r(X + Y) + r(Y) &= r(X + Y) + r(\cl_M(Y)) \\
                    &\geq r((X + Y) \cap \cl_M(Y)) + r(X + \cl_M(Y)).
  \end{align*}
  Noting that, $r((X + Y) \cap \cl_M(Y)) \geq r(Y)$ from \ref{q-mat_monotonicity} and \ref{q-mat_cl_increasing},
  we obtain
  \begin{equation*}
    r(X + Y) \geq r(X + \cl_M(Y)).
  \end{equation*}
  Hence, $r(X + Y) = r(X + \cl_M(Y))$.

  Now, by replacing $X$ with $\cl_M(X)$ and repeating the same argument, we have $r(X + \cl_M(Y)) = r(\cl_M(X) + \cl_M(Y))$.
\end{proof}

\begin{dfn} \label{def:q-mat_flat}
  Let $M = (E, r)$ be a $q$-matroid.
  A subspace $F \in \mathcal{L}(E)$ is called a \emph{flat} of $M$ if $r(F + x) > r(F)$ for every $x \in \qdiff{E}{F}{q}$.
  Furthermore, $\mathcal{F}_M$ denotes the set of all flats of $M$.
\end{dfn}

\begin{dfn} \label{def:q-mat_cover}
  Let $\mathcal{A}$ be a subset of $\mathcal{L}(E)$ and $A, B \in \mathcal{A}$.
  We say $B$ \emph{covers} $A$ in $\mathcal{A}$ if $A \leq B$ and for any $C \in \mathcal{A}$ such that $A \leq C \leq B$, then either $C = A$ or $C = B$.
\end{dfn}

\begin{prop}[\cite{CNQC}]
  If $M=(E, r)$ is a $q$-matroid, then $\mathcal{F}_M$ satisfies the following properties:
  \begin{enumerate}[label=\normalfont{(F\arabic*)}]
      \item $ E \in \mathcal{F}_M $;
      \item if $ F_1, F_2 \in \mathcal{F}_M$, then $ F_1 \cap F_2 \in \mathcal{F}_M $;
      \item if $ F \in \mathcal{F}_M $ and $ e \in \qdiff{E}{F}{q}$, there uniquely exists $F' \in \mathcal{F}_M$ containing $e$ such that $F'$ covers $F$ in $\mathcal{F}_M$.
  \end{enumerate}
\end{prop}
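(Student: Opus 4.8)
The plan is to route everything through the closure operator, beginning with the characterization that a subspace $F$ is a flat of $M$ if and only if $\cl_M(F) = F$. If $F$ is a flat, then every $x \in \qdiff{E}{F}{q}$ satisfies $r(F + x) > r(F)$, so the only one-dimensional subspaces $x$ with $r(F + x) = r(F)$ are those already inside $F$; summing them gives $\cl_M(F) = F$. Conversely, if $\cl_M(F) = F$ and $x \in \qdiff{E}{F}{q}$, then $x$ is not contained in $\cl_M(F)$, so $r(F + x) \neq r(F)$, whence $r(F + x) > r(F)$ by \ref{AxA} and \ref{q-mat_monotonicity}. I will also use repeatedly the fact that $r(\cl_M(X)) = r(X)$, which follows from Proposition~\ref{proprcl} by taking $Y = X$ and noting $X + \cl_M(X) = \cl_M(X)$, together with the idempotence \ref{q-mat_cl_idempotence}, which says that every closure is a flat.

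Properties (F1) and (F2) then drop out. For (F1), the set $\qdiff{E}{E}{q}$ is empty, so $E$ is a flat vacuously. For (F2), given flats $F_1, F_2$, monotonicity \ref{q-mat_cl_monotonicity} yields $\cl_M(F_1 \cap F_2) \leq \cl_M(F_i) = F_i$ for $i = 1, 2$, hence $\cl_M(F_1 \cap F_2) \leq F_1 \cap F_2$; combined with \ref{q-mat_cl_increasing} this gives $\cl_M(F_1 \cap F_2) = F_1 \cap F_2$, so $F_1 \cap F_2$ is a flat.

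The substance is (F3). For existence I take $F' \coloneqq \cl_M(F + e)$, a flat containing $e$. Since $F$ is a flat and $e \in \qdiff{E}{F}{q}$, property \ref{AxA} forces $r(F + e) = r(F) + 1$, so $r(F') = r(F) + 1$. To see that $F'$ covers $F$, let $G$ be a flat with $F \leq G \leq F'$ and $G \neq F$; choosing a line $x \in [G]_q \setminus [F]_q$ and using that $F$ is a flat gives $r(G) \geq r(F + x) > r(F)$, hence $r(G) = r(F) + 1 = r(F')$. Then for every $x \in [F']_q$ we have $r(G + x) \leq r(F') = r(G)$, so $r(G + x) = r(G)$ and thus $x \leq \cl_M(G) = G$; this forces $F' \leq G$, giving $G = F'$. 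Therefore no flat lies strictly between $F$ and $F'$.

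For uniqueness, suppose $F''$ is any flat covering $F$ with $e \leq F''$. Then $F + e \leq F''$, so \ref{q-mat_cl_monotonicity} and the flatness of $F''$ give $F' = \cl_M(F + e) \leq \cl_M(F'') = F''$. Since $F''$ covers $F$ and $F \leq F' \leq F''$, either $F' = F$ or $F' = F''$; but $e \in [F']_q \setminus [F]_q$ excludes the first, so $F'' = F'$. I expect the main obstacle to be the covering argument in existence: the two-sided squeeze showing simultaneously that an intermediate flat must have rank exactly $r(F) + 1$ and that equal rank underneath a closure forces equality of subspaces. This is precisely where the $q$-specific inputs—$r(\cl_M(X)) = r(X)$ and the line-by-line containment step $x \leq \cl_M(G) = G$—replace the elementary cardinality counts available in the classical single-element extension theory.
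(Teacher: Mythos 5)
Your proof is correct, but note that the paper itself gives no proof of this proposition at all: it is imported verbatim from \cite{CNQC}, so there is no in-paper argument to compare against. What you have written is a valid, self-contained derivation from the other cited propositions (\ref{claxiom}, \ref{clFF}, \ref{rXrclX}/\ref{proprcl}, and Lemma~\ref{rXAB}\ref{AxA}), and it follows essentially the standard cryptomorphism route of \cite{CNQC}: establish that flats are exactly the fixed points of $\cl_M$, get (F1) vacuously and (F2) from \ref{q-mat_cl_monotonicity} and \ref{q-mat_cl_increasing}, and prove (F3) with $F' = \cl_M(F+e)$ via the rank squeeze $r(F) < r(G) \leq r(F') = r(F)+1$ followed by the line-by-line absorption $x \leq \cl_M(G) = G$ for all $x \in [F']_q$. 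Every step checks out, including the uniqueness argument, where $F + e \leq F''$ and $\cl_M(F'') = F''$ force $F' \leq F''$ and the covering property finishes. Two minor remarks. First, your derivation of $r(\cl_M(X)) = r(X)$ by specializing Proposition~\ref{proprcl} at $Y = X$ is a detour with a whiff of circularity: the paper's proof of Proposition~\ref{proprcl} already invokes Proposition~\ref{rXrclX}, which states $r(\cl_M(X)) = r(X)$ directly, so you should cite \ref{rXrclX} instead. Second, in the converse of your flat characterization, \ref{AxA} is not needed---monotonicity \ref{q-mat_monotonicity} alone upgrades $r(F+x) \neq r(F)$ to $r(F+x) > r(F)$; relatedly, the exchange axiom \ref{q-mat_cl_MacLane-Steinitz_exchange} is never used, which is as expected since (F3) here only needs existence and uniqueness of covers, not exchange.
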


\begin{prop}[\cite{CNQC}]\label{clFF}
  Let $M = (E, r)$ be a $q$-matroid. For all $F \in \mathcal{F}_M$,
  \begin{equation*}
    \cl_M(F) = F.
  \end{equation*}
\end{prop}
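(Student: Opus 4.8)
The plan is to establish the two inclusions $F \leq \cl_M(F)$ and $\cl_M(F) \leq F$ separately, both by directly unwinding Definition~\ref{def:q-mat_flat} (flat) and Definition~\ref{def:q-mat_closure_operator} (closure). The crux is the observation that the defining condition of a flat is precisely the contrapositive of the condition that makes the closure grow, so essentially no real work beyond bookkeeping is needed.

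For the inclusion $F \leq \cl_M(F)$, I would simply invoke property \ref{q-mat_cl_increasing} of Proposition~\ref{claxiom}, which gives $X \leq \cl_M(X)$ for every $X \in \mathcal{L}(E)$; specializing to $X = F$ yields the claim at once. If one prefers a self-contained argument, every $x \in [F]_q$ satisfies $F + x = F$ and hence $r(F + x) = r(F)$, so each such $x$ lies in the index set of the sum defining $\cl_M(F)$; taking the sum over all one-dimensional subspaces of $F$ recovers $F$ itself.

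For the reverse inclusion $\cl_M(F) \leq F$, I would take any $x \in [E]_q$ contributing to $\cl_M(F)$, that is, with $r(F + x) = r(F)$, and show $x \leq F$. Indeed, if $x$ were not contained in $F$, then $x \in \qdiff{E}{F}{q}$, and Definition~\ref{def:q-mat_flat} would force $r(F + x) > r(F)$, contradicting $r(F + x) = r(F)$. Hence every contributing $x$ belongs to $[F]_q$, so their span is contained in $F$, giving $\cl_M(F) \leq F$. Combining the two inclusions yields $\cl_M(F) = F$.

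I do not anticipate a genuine obstacle, since the statement is a direct translation between the flat axiom and the closure definition. The only point needing a moment's care is the degenerate case $F = 0$: there the sum defining $\cl_M(0)$ is empty exactly when $0$ is a flat (no loops), so $\cl_M(0) = 0 = F$ and the equality persists.
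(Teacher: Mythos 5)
Your proof is correct. Note that the paper itself offers no proof of this proposition --- it is quoted from \cite{CNQC} --- so there is nothing internal to compare against; your argument is the standard one and would serve as a self-contained justification. Both directions are handled properly: $F \leq \cl_M(F)$ follows either from \ref{q-mat_cl_increasing} or from the direct observation that $r(F+x)=r(F)$ for every $x \in [F]_q$, and the reverse inclusion is exactly the contrapositive of the flat condition, since any contributing $x \in \qdiff{E}{F}{q}$ would force $r(F+x) > r(F)$. Your caution about $F = 0$ is in fact already subsumed by the general argument (the contributing set is then empty and the empty sum of subspaces is the zero space), but flagging the empty-sum convention is reasonable bookkeeping rather than a gap.
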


\begin{prop}[\cite{CNQC}]\label{rXrclX}
  Let $M = (E, r)$ be a $q$-matroid.
  For all $X \in \mathcal{L}(E)$, its closure $\cl_M(X)$ is a flat of $M$ satisfying $r(\cl_M(X)) = r(X)$.
\end{prop}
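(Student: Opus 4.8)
The plan is to prove the two assertions in turn, relying on Lemma~\ref{rXAB} together with the monotonicity and submodularity axioms \ref{q-mat_monotonicity} and \ref{q-mat_submodularity}. Write $S \coloneqq \{x \in [E]_q \mid r(X + x) = r(X)\}$, so that $\cl_M(X) = \sum S$ by definition. First I would record the easy containment $X \leq \cl_M(X)$: every $x \in [X]_q$ satisfies $X + x = X$ and hence $r(X + x) = r(X)$, so $[X]_q \subseteq S$ and therefore $X = \sum [X]_q \leq \sum S = \cl_M(X)$. By \ref{q-mat_monotonicity} this already yields the inequality $r(X) \leq r(\cl_M(X))$.

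The central step is the reverse inequality $r(\cl_M(X)) \leq r(X)$, and this is where Lemma~\ref{rXAB}\ref{XABX} does the real work. Since $E$ is finite-dimensional, $S$ is a finite set, say $S = \{x_1, \dots, x_m\}$, and I would set $A_j \coloneqq x_1 + \cdots + x_j$ and prove by induction on $j$ that $r(X + A_j) = r(X)$. The base case $A_0 = 0$ is immediate. For the inductive step, the hypothesis gives $r(X + A_j) = r(X)$, while $x_{j+1} \in S$ gives $r(X + x_{j+1}) = r(X)$; applying \ref{XABX} with the two subspaces $A_j$ and $x_{j+1}$ yields $r(X + A_j + x_{j+1}) = r(X + A_{j+1}) = r(X)$. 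Taking $j = m$ and using $X \leq \cl_M(X) = A_m$ gives $r(\cl_M(X)) = r(X + A_m) = r(X)$, which settles the rank equality. I expect this inductive use of \ref{XABX} to be the main (and essentially the only) obstacle, since it is exactly the point where one must upgrade the one-generator-at-a-time defining condition of the closure into a statement about adjoining all generators of $\cl_M(X)$ simultaneously; everything else is bookkeeping.

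Finally I would verify that $F \coloneqq \cl_M(X)$ is a flat, i.e.\ that $r(F + x) > r(F)$ for every $x \in \qdiff{E}{F}{q}$. Fix such an $x \notin [F]_q$ and suppose, for contradiction, that $r(F + x) = r(F)$; this is the negation of the desired strict inequality, since $r(F + x) \geq r(F)$ holds automatically by \ref{q-mat_monotonicity}. As $X \leq F$, monotonicity \ref{q-mat_monotonicity} gives $r(X + x) \leq r(F + x) = r(F) = r(X)$, while $r(X + x) \geq r(X)$ is again \ref{q-mat_monotonicity}; hence $r(X + x) = r(X)$, so $x \in S$ and thus $x \in [\cl_M(X)]_q = [F]_q$, contradicting $x \notin [F]_q$. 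Therefore $r(F + x) > r(F)$ for all $x \in \qdiff{E}{F}{q}$, so $\cl_M(X)$ is a flat, which together with the rank computation proves the proposition.
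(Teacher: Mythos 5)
Your proof is correct, and there is in fact nothing in the paper to measure it against: the paper states this proposition with a citation to \cite{CNQC} and supplies no in-text proof, so your argument is a genuine reconstruction rather than a divergence from the authors' route. The structure is the right one. The containment $X \leq \cl_M(X)$ together with \ref{q-mat_monotonicity} gives $r(X) \leq r(\cl_M(X))$; the induction over the finitely many generators $x_1, \dots, x_m$ of $S$, applying Lemma~\ref{rXAB}\ref{XABX} to the pair $(A_j, x_{j+1})$, is exactly the mechanism needed to upgrade the one-subspace-at-a-time defining condition of the closure to $r(X + A_m) = r(X)$, and $X \leq A_m = \cl_M(X)$ collapses $X + A_m$ to $\cl_M(X)$, yielding the rank equality. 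The flat verification is likewise sound: assuming $r(F + x) = r(F)$ for some $x \in \qdiff{E}{F}{q}$, the squeeze $r(X) \leq r(X + x) \leq r(F + x) = r(F) = r(X)$ forces $x \in S$, hence $x \in [\cl_M(X)]_q = [F]_q$, a contradiction; note that this step genuinely uses the rank equality $r(F) = r(X)$ established in the previous paragraph, so the order of your two halves matters and you have it right. Two cosmetic remarks: the finiteness of $S$ should be attributed explicitly to the finiteness of $[E]_q$ (as $E$ is finite-dimensional over the finite field $\mathbb{F}_q$), and the degenerate case $S = \emptyset$, which can occur only when $X = 0$, is silently but correctly absorbed by your base case $A_0 = 0$. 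Mechanically this is the standard cryptomorphism-style argument one expects from \cite{CNQC}, so nothing further is needed.
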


\begin{dfn} \label{def:q-mat_restriction}
  The \emph{restriction} of a $q$-matroid $M = (E, r)$ to a subspace $E' \leq E$, denoted by $M|E'$, is given by $(E', r|_{\mathcal{L}(E')})$.
\end{dfn}
We note that restrictions of $q$-matroids are $q$-matroids \cite{DQM}.

\begin{prop}[\cite{CNQC}]\label{clF}
  Let $M = (E, r)$ be a $q$-matroid.
Then, we have
\begin{equation*}
    \mathcal{F}_M = \{\cl_M(X) \mid X \in \mathcal{L}(E)\}.
  \end{equation*}
\end{prop}

\begin{thm}\label{cor_rest}
 Let $N=(E_N, r_N)$ be a $q$-matroid and $E_M \leq E_N$.
  If $M=(E_M, r_M)$ is the restriction of $N$ to $E_M$, then we have
  \begin{equation*}
    \cl_M(X) = \cl_N(X) \cap E_M
  \end{equation*}
  for all $X \in \mathcal{L}(E_M)$, and
  \begin{equation*}
    \mathcal{F}_M = \{F \cap E_M \mid F \in \mathcal{F}_N\}.
  \end{equation*}
\end{thm}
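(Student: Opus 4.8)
The plan is to establish the closure identity first and then read off the description of $\mathcal{F}_M$ from it, using the characterisation $\mathcal{F}_M = \{\cl_M(X) \mid X \in \mathcal{L}(E_M)\}$ supplied by Proposition~\ref{clF}. The only structural fact I would invoke repeatedly is that $M = N|E_M$ means $r_M = r_N|_{\mathcal{L}(E_M)}$, so that $r_M(Y) = r_N(Y)$ for every $Y \in \mathcal{L}(E_M)$; in particular this applies to $X$ and to $X + x$ whenever $x \in [E_M]_q$, since both subspaces lie in $E_M$.

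For the inclusion $\cl_M(X) \leq \cl_N(X) \cap E_M$ I would take any one-dimensional $x \in [E_M]_q$ contributing to the sum defining $\cl_M(X)$, so that $r_N(X+x) = r_M(X+x) = r_M(X) = r_N(X)$; this puts $x \leq \cl_N(X)$ by the definition of the closure of $N$, and since $x \leq E_M$ as well, every such generator lies in $\cl_N(X) \cap E_M$. For the reverse inclusion I would start from an arbitrary $x \in [\cl_N(X) \cap E_M]_q$. Then $X \leq X + x \leq \cl_N(X)$, so the monotonicity \ref{q-mat_monotonicity} together with $r_N(\cl_N(X)) = r_N(X)$ from Proposition~\ref{rXrclX} squeezes $r_N(X+x) = r_N(X)$. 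Because $x \leq E_M$ this reads $r_M(X+x) = r_M(X)$, so $x$ is one of the one-dimensional generators of $\cl_M(X)$; as $\cl_N(X) \cap E_M$ is spanned by such subspaces, this gives $\cl_N(X) \cap E_M \leq \cl_M(X)$ and hence the equality.

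With the closure identity in hand the claim about flats is almost immediate. By Proposition~\ref{clF} we have $\mathcal{F}_M = \{\cl_M(X) \mid X \in \mathcal{L}(E_M)\} = \{\cl_N(X) \cap E_M \mid X \in \mathcal{L}(E_M)\}$, and since each $\cl_N(X)$ is a flat of $N$ by Proposition~\ref{rXrclX}, every member of $\mathcal{F}_M$ already has the required form $F \cap E_M$ with $F \in \mathcal{F}_N$. For the opposite containment I would fix $F \in \mathcal{F}_N$ and show that $F \cap E_M$ equals its own $M$-closure. Applying the closure identity to $X = F \cap E_M$ gives $\cl_M(F \cap E_M) = \cl_N(F \cap E_M) \cap E_M$; here $F \cap E_M \leq F$ combined with \ref{q-mat_cl_monotonicity} and Proposition~\ref{clFF} yields $\cl_N(F \cap E_M) \leq \cl_N(F) = F$, whence $\cl_N(F \cap E_M) \cap E_M \leq F \cap E_M$, while \ref{q-mat_cl_increasing} gives the reverse containment after intersecting with $E_M$. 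Thus $\cl_M(F \cap E_M) = F \cap E_M$, and Proposition~\ref{clF} places $F \cap E_M$ in $\mathcal{F}_M$.

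I do not expect a genuine obstacle: the entire argument rests on the elementary observation that $r_N$ is constant on the interval $[X, \cl_N(X)]$, which is exactly what Proposition~\ref{rXrclX} provides. The only point demanding care is to keep the two closure operators apart and to remember that the sum defining $\cl_M(X)$ ranges only over $[E_M]_q$ rather than all of $[E_N]_q$; this restriction of the index set is precisely what forces the intersection with $E_M$ on the right-hand side, and getting the bookkeeping of one-dimensional generators versus the spanned subspace right is the one place where I would write the details out fully rather than leave them to the reader.
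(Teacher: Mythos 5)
Your proof is correct and follows essentially the same route as the paper: both directions of the closure identity are established exactly as in the paper's proof (the forward inclusion from $r_M = r_N|_{\mathcal{L}(E_M)}$ and the definition of the closure, the reverse from Proposition~\ref{rXrclX} together with \ref{q-mat_monotonicity}), and the flat description is then read off from Proposition~\ref{clF}. The only difference is one of explicitness: where the paper compresses the second claim into ``applying Proposition~\ref{clF} twice,'' you spell out the needed containment $\{F \cap E_M \mid F \in \mathcal{F}_N\} \subseteq \mathcal{F}_M$ by verifying that $\cl_M(F \cap E_M) = F \cap E_M$ via \ref{q-mat_cl_monotonicity} and Proposition~\ref{clFF}, a detail the paper leaves implicit.
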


\begin{proof}
We take $X \in \mathcal{L}(E_M)$. By the definition of closure operator and restriction,
  \begin{equation*}
    \cl_M(X) = \cl_M(X) \cap E_M \leq \cl_N(X) \cap E_M.
  \end{equation*}
  For all $x \in [\cl_N(X) \cap E_M]_q = [\cl_N(X)]_q \cap [E_M]_q$, we have $r_N(X + x) = r_N(X)$ by
Proposition~\ref{rXrclX}.
  Hence, we have $[\cl_N(X) \cap E_M]_q \subseteq [\cl_M(X)]_q$, which implies that $\cl_N(X) \cap E_M\leq\cl_M(X)$.

  Applying Proposition~\ref{clF} twice, we also obtain
  \begin{equation*}
    \mathcal{F}_M = \{\cl_N(X) \cap E_M \mid X \in \mathcal{L}(E_N)\}=\{F \cap E_M \mid F \in \mathcal{F}_N\}.\qedhere
  \end{equation*}
\end{proof}

\begin{dfn}
  Let $M=(E, r)$ be a $q$-matroid and $X$ be a subspace of $E$. 
  If $r(X) = \dim{X} = r(E)$, we call $X$ a \emph{basis} of $M$.
  We denote by $\mathcal{B}_M$ the collection of all the bases of $M$.
\end{dfn}

If a finite dimensional vector space $E$ over $\mathbb{F}_q$ and its subspaces $\mathcal{F}$ satisfy (F1), (F2), and (F3),
there uniquely exists a $q$-matroid whose flats are $\mathcal{F}$. 
Hence, we identify the rank function with the flats.
There are also such correspondence among its closure operator, bases, flats, and rank function.
We call them the \emph{cryptomorphisms} of $q$-matroids (see \cite{CNQC}).

\begin{dfn} \label{def:q-mat_isomorhpic}
  Let $M_1 = (E_{1}, r_{1})$ and $M_2 = (E_{2}, r_{2})$ be $q$-matroids, where $E_1$ and $E_2$ are over $\mathbb{F}_q$.
  If there exists an $\mathbb{F}_q$-isomorphism $g \colon E_{1} \to E_{2}$ such that $r_1 = r_2 \circ \hat{g}$,
  we say $M_1$ is \emph{isomorphic} to $M_2$ and write $M_1 \cong M_2$.
\end{dfn}

If $M_1 = (E_1, r_1)$ is a $q$-matroid and $g$ is an $\mathbb{F}_q$-isomorphism $g \colon E_1 \to E_2$, it is easy to see that $(E_1, r_1 \circ \hat{g})$ is a $q$-matroid which is isomorphic to $M_1$. 
It is also straightforward that $\cong$ is an equivalence relation.

\begin{dfn} \label{def:q-mat_dual}
  Let $M = (E, r)$ be a $q$-matroid. The ordered pair $M^{\ast} = (E, r^{\ast})$ is called the \emph{dual} of $M$,
  where $r^{\ast}$ is the non-negative integer function on $\mathcal{L}(E)$ defined as follows: for each $X \in \mathcal{L}(E)$,
  \begin{equation*}
    r^{\ast}(X) \coloneqq \dim(X) - r(M) + r(X^{\perp}),
  \end{equation*}
  where $X^\perp$ is the orthogonal complement of $X$ in $E$.
\end{dfn}
As shown in \cite{DQM}, duals of $q$-matroids are $q$-matroids.

\begin{prop}[\cite{RMCQP}]\label{iso_dual}
  $M_1 \cong M_2$ if and only if ${M_1}^{\ast} \cong {M_2}^{\ast}$.
\end{prop}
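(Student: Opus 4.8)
The plan is to reduce the biconditional to a single implication. Since $\cong$ is an equivalence relation and duality is an involution (a direct computation gives $M^{\ast\ast} = M$ for every $q$-matroid $M$), it suffices to prove that $M_1 \cong M_2$ implies $M_1^{\ast} \cong M_2^{\ast}$; the converse then follows by applying this implication to $M_1^{\ast}$ and $M_2^{\ast}$ and invoking $M_i^{\ast\ast} = M_i$.

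For the forward implication, suppose $g \colon E_1 \to E_2$ is an $\mathbb{F}_q$-isomorphism with $r_1 = r_2 \circ \hat{g}$, so that $r_1(X) = r_2(g(X))$ for all $X \in \mathcal{L}(E_1)$. Writing $\langle \cdot, \cdot \rangle_i$ for the fixed non-degenerate bilinear form on $E_i$ used to define $\perp$, I would introduce the adjoint $g^{t} \colon E_2 \to E_1$ determined by $\langle g(x), y \rangle_2 = \langle x, g^{t}(y) \rangle_1$, and set $h \coloneqq (g^{t})^{-1} \colon E_1 \to E_2$. This $h$ is again an $\mathbb{F}_q$-isomorphism, and the candidate claim is that $h$ witnesses $M_1^{\ast} \cong M_2^{\ast}$.

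The heart of the argument is the identity $h(X)^{\perp} = g(X^{\perp})$ for every $X \in \mathcal{L}(E_1)$. I would prove it from the defining relation of the adjoint: a vector $y$ lies in $g(Z)^{\perp}$ exactly when $g^{t}(y) \in Z^{\perp}$, which yields $g(Z)^{\perp} = (g^{t})^{-1}(Z^{\perp})$; taking $Z = X^{\perp}$ and using $X^{\perp\perp} = X$ (non-degeneracy) gives the claimed identity. Granting this, the verification is routine: for all $X$ one has $\dim h(X) = \dim X$, while $g(E_1) = E_2$ together with $r_1 = r_2 \circ \hat{g}$ gives $r_1(E_1) = r_2(E_2)$, and
\begin{align*}
  r_2^{\ast}(h(X)) &= \dim(h(X)) - r_2(E_2) + r_2\bigl(h(X)^{\perp}\bigr) \\
                   &= \dim(X) - r_1(E_1) + r_2\bigl(g(X^{\perp})\bigr) \\
                   &= \dim(X) - r_1(E_1) + r_1(X^{\perp}) = r_1^{\ast}(X),
\end{align*}
so $r_1^{\ast} = r_2^{\ast} \circ \hat{h}$ and $M_1^{\ast} \cong M_2^{\ast}$.

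I expect the main obstacle to be the perp-intertwining identity $h(X)^{\perp} = g(X^{\perp})$, since an arbitrary linear isomorphism $g$ need not respect orthogonal complements; the point is that passing to the contragredient $(g^{t})^{-1}$ exactly corrects this defect. Care is needed because $\perp$ is taken with respect to two different forms on $E_1$ and $E_2$, so the adjoint $g^{t}$ must be set up across both spaces and non-degeneracy must be used to guarantee $X^{\perp\perp} = X$ and the invertibility of $g^{t}$.
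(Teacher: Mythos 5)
Your proof is correct. The paper itself gives no argument for this proposition --- it is quoted from the reference \cite{RMCQP} --- so there is no in-text proof to match; but your argument is exactly the standard one used in that literature: the contragredient $h = (g^{t})^{-1}$ is the right witness, and your reduction of the biconditional to one implication via the involution $M^{\ast\ast} = M$ is sound (the computation $r^{\ast\ast}(X) = r(X)$ needs only $r(0)=0$, which follows from \ref{q-mat_boundedness}, and $X^{\perp\perp}=X$). The key intertwining identity $h(X)^{\perp} = g(X^{\perp})$ is proved correctly from the adjoint relation together with non-degeneracy, and the final rank computation, including $r_2(E_2) = r_1(E_1)$, closes the argument.

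One small point to make explicit: since you take $\perp$ on both sides of a single form on each space, the argument as written tacitly assumes the forms are symmetric (or reflexive), so that left- and right-orthogonal complements coincide and $X^{\perp\perp}=X$ holds; this is harmless here because duality of $q$-matroids is defined with respect to the standard (symmetric, non-degenerate) bilinear form, but a referee would want the hypothesis stated. With that caveat recorded, the proof is complete and self-contained, which is arguably a service beyond the paper's citation.
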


\begin{dfn} \label{def:q-mat_extension}
  A $q$-matroid $N=(E_N, r_N)$ is called an \emph{extension} of a $q$-matroid $M=(E_M, r_M)$ if $N|E_M=M$.
  In particular, if $\dim E_N  =  \dim E_M + 1$, then $N$ is called a \emph{one-dimensional extension} of $M$ by $e \in \qdiff{E_N}{E_M}{q}$.
\end{dfn}

\begin{prop}
   Suppose $M_1 \cong M_2$. Let $E_{1}'$ and $E_{2}'$ be vector spaces over $\mathbb{F}_q$ with $\dim E_{1}' = \dim E_{2}'$ such that $E_1 \leq E_1'$ and $E_2 \leq E_2'$.
   If $M'_1=(E'_1, r'_1)$ is an extension of $M_1$ to $E_1'$, there exists an extension $M'_2 = (E_2', r'_2)$ of $M_2$ to $E_2'$
   such that $M'_1 \cong M'_2$.
\end{prop}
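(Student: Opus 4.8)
The plan is to reduce the whole statement to a single linear-algebra construction: extending the given isomorphism $g\colon E_1\to E_2$ to an isomorphism $g'\colon E_1'\to E_2'$ that still agrees with $g$ on $E_1$, and then transporting the rank function $r_1'$ across $g'$. First I would record what the hypotheses give. Since $M_1\cong M_2$, fix an $\mathbb{F}_q$-isomorphism $g\colon E_1\to E_2$ with $r_1=r_2\circ\hat g$; in particular $\dim E_1=\dim E_2$. Combined with the assumption $\dim E_1'=\dim E_2'$, this yields
\begin{equation*}
  \dim E_1'-\dim E_1=\dim E_2'-\dim E_2,
\end{equation*}
so the complements of $E_1$ in $E_1'$ and of $E_2$ in $E_2'$ have the same dimension.

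Next I would build $g'$. Choose subspaces $U_1\leq E_1'$ and $U_2\leq E_2'$ with $E_1'=E_1\oplus U_1$ and $E_2'=E_2\oplus U_2$; by the dimension count $\dim U_1=\dim U_2$, so there is an $\mathbb{F}_q$-isomorphism $h\colon U_1\to U_2$. Setting $g'\coloneqq g\oplus h$ gives an $\mathbb{F}_q$-isomorphism $g'\colon E_1'\to E_2'$ with $g'|_{E_1}=g$. The essential consequences I would extract are that $g'(E_1)=g(E_1)=E_2$, hence $(g')^{-1}(E_2)=E_1$, and that $(g')^{-1}$ restricts to $g^{-1}$ on $\mathcal{L}(E_2)$. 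This extension step is the only place where all three dimension hypotheses are used, and it is really the crux of the argument; everything afterward is transport and bookkeeping.

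Finally I would define $M_2'=(E_2',r_2')$ by $r_2'\coloneqq r_1'\circ\widehat{(g')^{-1}}$. Applying the remark following Definition~\ref{def:q-mat_isomorhpic} to the $q$-matroid $M_1'$ and the isomorphism $(g')^{-1}\colon E_2'\to E_1'$ shows that $M_2'$ is a $q$-matroid, and since $r_2'\circ\hat{g'}=r_1'$ it is isomorphic to $M_1'$ via $g'$. It then remains to verify $M_2'|E_2=M_2$, i.e.\ $r_2'|_{\mathcal{L}(E_2)}=r_2$. For $Y\in\mathcal{L}(E_2)$ the identities above give $(g')^{-1}(Y)=g^{-1}(Y)\in\mathcal{L}(E_1)$, so using that $M_1'$ extends $M_1$ and that $r_1=r_2\circ\hat g$,
\begin{equation*}
  r_2'(Y)=r_1'\bigl(g^{-1}(Y)\bigr)=r_1\bigl(g^{-1}(Y)\bigr)=r_2\bigl(g(g^{-1}(Y))\bigr)=r_2(Y).
\end{equation*}
Thus $M_2'$ is an extension of $M_2$ to $E_2'$ with $M_1'\cong M_2'$, completing the proof. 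I do not anticipate any genuine obstacle beyond the extension of $g$ to $g'$; the main point of care is ensuring $g'$ genuinely restricts to $g$ on $E_1$, so that the restriction identity $r_2'|_{\mathcal{L}(E_2)}=r_2$ goes through.
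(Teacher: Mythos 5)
Your proof is correct and follows essentially the same route as the paper: extend the isomorphism between the ground spaces to an isomorphism of the ambient spaces and transport the rank function $r_1'$ across it, then verify the restriction to $E_2$ recovers $r_2$. The only cosmetic differences are that the paper orients the isomorphism as $g\colon E_2\to E_1$ (so it composes with $r_1'$ directly instead of using $(g')^{-1}$) and merely asserts the existence of the extension $f$ with $\hat f|_{\mathcal{L}(E_2)}=\hat g$, whereas you spell out its construction via complementary subspaces.
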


\begin{proof}
  Take an $\mathbb{F}_q$-isomorphism $g \colon E_2 \to E_1$ such that $r_2 = r_1 \circ \hat{g}$.
  We note that there exists an $\mathbb{F}_q$-isomorphism $f \colon E'_2 \to E'_1$ such that $\hat{f}|_{\mathcal{L}(E_2)} = \hat{g}$.
  We set a function $r'_2 \coloneqq r'_1 \circ \hat{f}$.
  Then, $r'_2|_{\mathcal{L}(E_2)} = (r'_1|_{\mathcal{L}(E_1)}) \circ (\hat{f}|_{\mathcal{L}(E_2)}) = r_1 \circ \hat{g} = r_2$.
  Hence, $M'_2=(E'_2, r'_2)$ is an extension of $M_2$ and $M'_1 \cong M'_2$.
\end{proof}
 
  \section{Extensions of $q$-matroids}\label{ODEQM}
  In this section, we show that one-dimensional extensions are obtained by selecting modular cuts properly.
Basically we follow the argument in \cite[Section 7.2]{Oxley} to prove each result,
where we remark slight differences caused by taking a $q$-analogue if necessary.
Hereafter, set $E'$ as a $\dim E + 1$ dimensional vector space containing $E$.

\subsection{Modular Cuts of $q$-matroids}

All one-dimensional extensions of a $q$-matroid $M=(E, r_M)$ to $E$ could, in principle,
be obtained by taking the restriction of all $q$-matroids on $E'$ and checking whether they coincide with $M$.
However, it is impractical to list explicitly all $q$-matroids on a given ground space.
Therefore, we develop a method to construct all the one-dimensional extensions of $M$ using certain flats of $M$.
We begin by examining the behavior of the rank of flats in one-dimensional extensions of $M$.

We take $F \in \mathcal{F}_M$. We assume $N=(E', r_N)$ is a one-dimensional extension of $M$ by $e$. Then, by Theorem~\ref{cor_rest} and Proposition~\ref{proprcl},
one of the following properties holds:
\begin{itemize}
  \item $F + e$ is a flat of $N$ and $r_N(F + e) = r_N(F)$;
  \item $F + e$ is a flat of $N$ and $r_N(F + e) = r_N(F) + 1$;
  \item $F + e$ is not a flat of $N$.
\end{itemize}

Now we observe a $q$-analogue of \cite[Lemma 7.2.2]{Oxley}. For this purpose, we employ the $q$-analogue of the modular pairs.

\begin{dfn} \label{def:q-mat_modular_pair}
  Let $M=(E, r)$ be an $q$-matroid.
  A pair $(F_1, F_2)$ of two flats of $M$ is called a \emph{modular pair} of $M$ if
  \begin{equation*}
    r(F_1 + F_2) + r(F_1 \cap F_2) = r(F_1) + r(F_2).
  \end{equation*}
\end{dfn}

\begin{lem} \label{lem:modularcut_1}
  Let $N=(E', r_N)$ be a one-dimensional extension of $M$ by $e$ and $\mathcal{M}$ be a subset of $\mathcal{F}_M$.
  If $\mathcal{M}= \{F \in \mathcal{F}_M \colon F + e \in \mathcal{F}_M \textrm{\;and\;} r_N(F + e) = r_N(F)\}$,
  then $\mathcal{M}$ has the following properties:
  \begin{enumerate}[label=\textnormal{(M\arabic*)}]
    \item if $F \in \mathcal{M}$ and $F'$ is a flat of $M$ containing $F$, then $F' \in \mathcal{M}$; \label{M1}
    \item if $F_1, F_2\in \mathcal{M}$ and $(F_1, F_2)$ is a modular pair of $M$, then $F_1 \cap F_2 \in \mathcal{M}$. \label{M2}
  \end{enumerate}
\end{lem}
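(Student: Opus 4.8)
The plan is to reduce both \ref{M1} and \ref{M2} to verifying a single rank equality $r_N(G+e)=r_N(G)$ for the relevant flat $G$, and to dispose of the flat condition once and for all by a preliminary observation: if $G\in\mathcal{F}_M$ and $r_N(G+e)=r_N(G)$, then $G+e\in\mathcal{F}_N$. I would prove this by noting that $r_N(G+e)=r_N(G)$ puts $e\leq\cl_N(G)$, so $\cl_N(G+e)=\cl_N(G)$ by the closure axioms \ref{q-mat_cl_monotonicity} and \ref{q-mat_cl_idempotence}; since $G$ is a flat of $M$, Theorem~\ref{cor_rest} and Proposition~\ref{clFF} give $\cl_N(G)\cap E=\cl_M(G)=G$, and a dimension count in $E'$ (using $\dim E'=\dim E+1$ and $G+e\leq\cl_N(G)$) forces $\cl_N(G)=G+e$. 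Hence $G+e=\cl_N(G+e)$ is a flat of $N$. With this in hand, membership $G\in\mathcal{M}$ becomes equivalent to the pair of conditions $G\in\mathcal{F}_M$ and $r_N(G+e)=r_N(G)$.

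For \ref{M1}, I take $F\in\mathcal{M}$ and a flat $F'$ of $M$ with $F\leq F'$. Applying Lemma~\ref{rXAB}\ref{AxABxB} to the $q$-matroid $N$ with $x=e$, the hypothesis $r_N(F+e)=r_N(F)$ propagates upward to $r_N(F'+e)=r_N(F')$. Since $F'$ is already a flat of $M$, the preliminary observation yields $F'+e\in\mathcal{F}_N$, so $F'\in\mathcal{M}$.

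For \ref{M2}, let $(F_1,F_2)$ be a modular pair with $F_1,F_2\in\mathcal{M}$ and set $G=F_1\cap F_2$, which is a flat of $M$ since the intersection of two flats is a flat (property (F2)). By the preliminary observation it remains only to show $r_N(G+e)=r_N(G)$. The crucial step is submodularity \ref{q-mat_submodularity} applied in $N$ to $F_1+e$ and $F_2+e$:
\begin{equation*}
  r_N(F_1+F_2+e)+r_N\bigl((F_1+e)\cap(F_2+e)\bigr)\leq r_N(F_1+e)+r_N(F_2+e)=r_N(F_1)+r_N(F_2).
\end{equation*}
I would then bound the two left-hand terms from below by \ref{q-mat_monotonicity}, using $F_1+F_2\leq F_1+F_2+e$ and $G+e\leq(F_1+e)\cap(F_2+e)$, and rewrite the $N$-ranks of subspaces of $E$ as $M$-ranks (as $N|E=M$). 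Invoking the modular-pair identity $r_M(F_1)+r_M(F_2)=r_M(F_1+F_2)+r_M(G)$ then cancels $r_M(F_1+F_2)$ and leaves $r_N(G+e)\leq r_M(G)=r_N(G)$; the reverse inequality is \ref{q-mat_monotonicity}, so $r_N(G+e)=r_N(G)$ and $G\in\mathcal{M}$.

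The main obstacle is this rank bound in \ref{M2}: one must set up the submodular inequality on the extended flats $F_i+e$ and correctly relate $(F_1+e)\cap(F_2+e)$ to $(F_1\cap F_2)+e$ (only an inclusion, handled by monotonicity) and $F_1+F_2+e$ to $F_1+F_2$, so that the modular-pair hypothesis, which lives entirely inside $E$, produces the decisive cancellation. The preliminary flat observation is where the genuinely $q$-specific content sits—the dimension count in $E'$ together with the restriction formula of Theorem~\ref{cor_rest}—and it is precisely what frees both \ref{M1} and \ref{M2} from checking the flat condition directly.
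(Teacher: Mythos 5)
Your proof is correct and takes essentially the same route as the paper, whose own proof is just a pointer to Oxley's Lemma~7.2.2 transported to the $q$-setting with exactly the ingredients you use: Theorem~\ref{cor_rest} (plus the closure axioms and a dimension count) to settle the flat condition, Lemma~\ref{rXAB}\ref{AxABxB} applied in $N$ for \ref{M1}, and submodularity on $F_1+e$, $F_2+e$ combined with the modular-pair identity for \ref{M2}. The only cosmetic differences are that you get by with the inclusion $(F_1\cap F_2)+e\leq(F_1+e)\cap(F_2+e)$ and monotonicity where the paper highlights the exact equality $(F_1+e)\cap(F_2+e)=(F_1\cap F_2)+e$ as its key $q$-analogue observation, and that you correctly read the statement's ``$F+e\in\mathcal{F}_M$'' as the intended $F+e\in\mathcal{F}_N$.
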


\begin{proof}
  We consider a $q$-analogue of the proof of \cite[Lemma 7.2.2]{Oxley}.
  To derive the equation corresponding to \cite[Equation~(7.1)]{Oxley},
  it suffices to observe the fundamental fact that $(F_1 + e) \cap (F_2 + e) = (F_1 \cap F_2) + e$, which follows from linear algebra.
  The remaining part is completed by replacing the operation `$\cup$'  with `$+$' and by applying \ref{q-mat_monotonicity}, Lemmas~\ref{rXAB}\ref{AxABxB}, and Theorem~\ref{cor_rest}.
\end{proof}

For any $q$-matroid $M$, if a subset $\mathcal{M}$ of $\mathcal{F}_M$ satisfies \ref{M1} and \ref{M2},
we call $\mathcal{M}$ a \textit{modular cut} of $M$, and denote by $\MC_M$ the collection of all modular cuts of $M$.
Noting that $|\qdiff{E'}{E}{q}| = 1$ does \textit{not} generally hold,
we observe a key difference from the single-element extension in classical matroid theory.
When we take $e, e' \in \qdiff{E'}{E}{q}$ and a flat $F \in \mathcal{F}_M$, it is straightforward from a similar argument before Lemma~\ref{lem:modularcut_1}
that either of the following properties holds:
\begin{itemize}
  \item $F + e$ (or $F + e'$) is a flat of $N$ but $F + e'$ (or $F + e$) is not a flat of $N$;
  \item $F + e$ and $F + e'$ both are flats of $N$;
  \item neither $F + e$ nor $F + e'$ is a flat of $N$.
\end{itemize}

\begin{lem}\label{lem_QM}
  Let $N=(E', r_N)$ be a one-dimensional extension of a $q$-matroid $M$ on $E$.
  The mapping which assigns
  \begin{equation*}
    \{F \in \mathcal{F}_M \mid F + e \in \mathcal{F}_N \text{ and } r_N(F + e) = r_N(F) \}
  \end{equation*}
  for all $e \in \qdiff{E'}{E}{q}$ defines a function $\mu_N \colon \qdiff{E'}{E}{q} \to \MC_M$.
  Then $\mu_N$ satisfies the following property:
  \begin{enumerate}[label=\textnormal{(QM)}]
    \item Let $e_1, e_2 \in \qdiff{E'}{E}{q}$ and $F \in \mu_N(e_1)$. Then, $F + e_1 = F + e_2$ if and only if $F \in \mu_N(e_2)$. \label{QM}
  \end{enumerate}
\end{lem}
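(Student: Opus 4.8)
The plan is to reduce property~\ref{QM} to a single structural observation: whenever $F \in \mu_N(e)$, the subspace $F + e$ is forced to equal the closure $\cl_N(F)$. Once this is in hand, both directions of~\ref{QM} become immediate. First I would record that $\mu_N$ is well defined, i.e. that $\mu_N(e) \in \MC_M$ for each $e \in \qdiff{E'}{E}{q}$; this is exactly the content of Lemma~\ref{lem:modularcut_1} applied to the new element $e$, so no further work is needed there.

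The core step is the claim that $F \in \mu_N(e)$ implies $F + e = \cl_N(F)$. To see $F + e \leq \cl_N(F)$, I would note that the hypothesis $r_N(F + e) = r_N(F)$ means $e$ is one of the lines defining the closure, so $e \leq \cl_N(F)$; combined with $F \leq \cl_N(F)$ from~\ref{q-mat_cl_increasing}, this gives $F + e \leq \cl_N(F)$. For the reverse inclusion I would use that $F + e$ is a flat of $N$ by hypothesis, so by Proposition~\ref{clFF} we have $\cl_N(F + e) = F + e$; then monotonicity~\ref{q-mat_cl_monotonicity} applied to $F \leq F + e$ yields $\cl_N(F) \leq \cl_N(F + e) = F + e$. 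Hence $F + e = \cl_N(F)$, as claimed.

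With this lemma, property~\ref{QM} follows directly. Fix $e_1, e_2 \in \qdiff{E'}{E}{q}$ and $F \in \mu_N(e_1)$, so that $F + e_1 = \cl_N(F)$. For the forward direction, if $F + e_1 = F + e_2$ then $F + e_2 = \cl_N(F)$, which is a flat of $N$ with $r_N(F + e_2) = r_N(\cl_N(F)) = r_N(F)$ by Proposition~\ref{rXrclX}; both defining conditions of $\mu_N(e_2)$ hold, so $F \in \mu_N(e_2)$. For the converse, if $F \in \mu_N(e_2)$ then the core lemma gives $F + e_2 = \cl_N(F) = F + e_1$, which is the required equality.

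I expect the only genuine obstacle to be establishing the core lemma cleanly, specifically justifying both inclusions $F + e \leq \cl_N(F)$ and $\cl_N(F) \leq F + e$ directly from the closure axioms (the definition of closure for the first, Proposition~\ref{clFF} together with monotonicity~\ref{q-mat_cl_monotonicity} for the second) rather than resorting to dimension counting. Everything after that is formal manipulation, and the well-definedness of $\mu_N$ is inherited wholesale from Lemma~\ref{lem:modularcut_1}.
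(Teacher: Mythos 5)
Your proposal is correct and follows essentially the same route as the paper: the paper's proof also delegates well-definedness to the observation that $N$ is an extension by every $e' \in \qdiff{E'}{E}{q}$ (i.e.\ to Lemma~\ref{lem:modularcut_1}), handles the forward direction by transferring $r_N(F+e_1)=r_N(F)$ along the equality $F+e_1=F+e_2$, and settles the converse via exactly your core identity $\cl_N(F)=F+e_1=F+e_2$, which it invokes tersely as ``by the definition of $\mu_N$ and $\cl_N$.'' Your explicit two-inclusion proof of $F+e=\cl_N(F)$ (definition of closure for one direction, Proposition~\ref{clFF} with \ref{q-mat_cl_monotonicity} for the other) simply spells out what the paper compresses.
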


\begin{proof}
  If $N$ is an extension of $M$ by $e$, $N$ is also regarded as extension by any other element $e'$ in $\qdiff{E'}{E}{q}$.
  Hence, the codomain of $\mu_N$  is $\MC_M$.

  If $F + e_1 = F+e_2$, then $r_N(F + e_2) = r_N(F + e_1) = r_N(F)$, and so $F \in \mu_N(e_2)$.
  Conversely, by the definition of $\mu_N$ and $\cl_N$, if $F \in \mu_N(e_2)$, then we have that \quad
  $\cl_N(F) = F + e_1 = F + e_2$.
  Therefore, $\mu_N$ satisfies \ref{QM}.
\end{proof}

\begin{eg}
  Let $E'$ be the row space $\begin{smallamatrix}100\\010\\001\end{smallamatrix}$ over $\mathbb{F}_2$ and
  the function $r_N \colon \mathcal{L}(E') \to \mathbb{Z}_{\geq 0}$ satisfy $r_N(X) \coloneqq \max\{\dim{X}, 1\}$ for all $X \in \mathcal{L}(E')$.
  Then $(E', r_N)$ is a uniform $q$-matroid $U_{1, 3}$.
  We also let $E = \begin{smallamatrix}100\\011 \end{smallamatrix}$ and $M$ be the restriction of $N$ to $E$.
  Then,
  \begin{equation*}
    \mathcal{F}_M = \left\{\langle 000 \rangle, \begin{amatrix} 100\\011 \end{amatrix}\right\}
    \quad 
    \MC_M=\left\{
      \emptyset,
      \left\{\begin{amatrix}100\\011 \end{amatrix}\right\}, 
      \left\{\langle 000 \rangle, \begin{amatrix} 100\\011 \end{amatrix}\right\}
    \right\}.
  \end{equation*}
  Note that $\qdiff{E'}{E}{q} = \{\langle 001 \rangle, \langle 010 \rangle, \langle 101 \rangle, \langle 110 \rangle\}$ and the function $\mu_N \colon \qdiff{E'}{E}{q} \to \MC_M$ defined as in Lemma~\ref{lem_QM} gives
  \begin{equation*}
    \mu_N(\langle 001 \rangle) = \mu_N(\langle 010 \rangle) = \mu_N(\langle 101 \rangle) = \mu_N(\langle 110 \rangle)
                             = \left\{\begin{amatrix} 100\\ 011 \end{amatrix}\right\} \in \MC_M.
  \end{equation*}
  Furthermore, $\mu_N$ satisfies \ref{QM} because
  \begin{align*}
    \langle 001 \rangle + \begin{amatrix} 100\\ 011 \end{amatrix} = \langle 010 \rangle + \begin{amatrix} 100\\ 011 \end{amatrix}
                                                   = \langle 101 \rangle + \begin{amatrix} 100\\ 011 \end{amatrix}
                                                   = \langle 110 \rangle + \begin{amatrix} 100\\ 011 \end{amatrix}.
  \end{align*}
\end{eg}

\subsection{Construction of Extensions}

We introduce a class of functions each of which gives rise to a unique extension.

\begin{dfn}
  For any $q$-matroid $M$ on $E$, a function $\mu \colon \qdiff{E'}{E}{q} \to \MC_M$ is called a \emph{modular cut selector} of $M$
  if $\mu$ satisfies \ref{QM}.
  We denote by $\MCS_M$ the set of all modular cut selectors of $M$.
\end{dfn}

The following theorem is one of the main results of this paper, which is a $q$-analogue of \cite[Theorem 7.2.3]{Oxley}.

\begin{thm}\label{thm_modular_cut_extension}
  Let $M=(E, r_M)$ be a $q$-matroid on $E$, $\mu$ be a modular cut selector of $M$, and $E' \geq E$ satisfy $\dim E' = \dim E + 1$.
  We define $r_{\mu} \colon \mathcal{L}(E') \to \mathbb{Z}_{\geq 0}$ as, for all $X \in \mathcal{L}(E)$ and $e \in \qdiff{E'}{E}{q}$,
  \begin{equation*}
    r_{\mu}(X) = r_M(X) \quad \text{and} \quad
    r_{\mu}(X + e) = r_M(X) + \delta_{X, e},
  \end{equation*}
  where $\delta_{X, e} = 0$ if $\cl_M(X) \in \mu(e)$; $\delta_{X, e} = 1$ otherwise.
  Then, $N_{\mu} = (E', r_{\mu})$ is a unique one-dimensional extension of $M$ such that for all $e \in \qdiff{E'}{E}{q}$,
  \begin{equation*}
    \{F \in \mathcal{F}_M \mid F + e \in \mathcal{F}_M \textrm{ and } r_{\mu}(F + e) = r_{\mu}(F)\} = \mu(e).
  \end{equation*}
\end{thm}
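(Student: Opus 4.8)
The plan is to verify, in order, that (i) $r_\mu$ is well defined, (ii) $r_\mu$ satisfies \ref{q-mat_boundedness}--\ref{q-mat_submodularity} so that $N_\mu$ is a $q$-matroid, (iii) $N_\mu$ restricts to $M$ and induces $\mu$ as its modular cut selector, and (iv) $N_\mu$ is the only such extension. Throughout I fix a complement $E' = E \oplus \langle \epsilon \rangle$, so that every new direction is uniquely $\langle w + \epsilon \rangle$ with $w \in E$, and every $Y \in \mathcal{L}(E')$ with $Y \not\le E$ decomposes as $Y = (Y \cap E) + e$ for any $e \in \qdiff{Y}{E}{q}$, with $\dim(Y \cap E) = \dim Y - 1$. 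For well-definedness, since $X := Y \cap E$ is forced, the only ambiguity in $r_\mu(Y)$ is the choice of $e$; if $X + e_1 = X + e_2$ then, writing $F = \cl_M(X)$ and using $X \le F$, a one-line dimension count gives $F + e_1 = F + e_2$, whereupon \ref{QM} yields $F \in \mu(e_1) \Leftrightarrow F \in \mu(e_2)$ and the two values of $\delta$ agree. This is exactly the role of the modular cut selector condition.

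Boundedness \ref{q-mat_boundedness} is immediate. For \ref{q-mat_monotonicity} and \ref{q-mat_submodularity} I split into cases according to whether each argument lies in $E$. The cases with all arguments in $E$, or one of them in $E$, follow from the corresponding property of $r_M$ together with \ref{M1}: enlarging $X$ only moves $\cl_M(X)$ upward, and \ref{M1} forces $\delta$ to decrease. I will repeatedly use the elementary fact that two flats $F_1 \le F_2$ of $M$ with $r_M(F_1) = r_M(F_2)$ coincide (else a point of $\qdiff{F_2}{F_1}{q}$ would raise the rank of the flat $F_1$); this settles the one delicate monotonicity subcase, where $A,B \not\le E$ share a direction and $A\cap E \le B\cap E$.

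Submodularity in the case $A, B \not\le E$ is the crux, and here the $q$-setting forces a split absent from \cite[Theorem 7.2.3]{Oxley}. With $X_A = A \cap E$, $X_B = B \cap E$, I distinguish whether $A$ and $B$ share a new direction. If $A \cap B \not\le E$, pick a common $e$; then $A \cap B = (X_A \cap X_B) + e$ and $A + B = (X_A + X_B) + e$, and after cancelling the submodular inequality for $r_M$ the problem reduces to showing $\delta_{X_A \cap X_B, e} = 0$ when $\delta_{X_A, e} = \delta_{X_B, e} = 0$ and that inequality is tight; tightness forces $(\cl_M(X_A), \cl_M(X_B))$ to be a modular pair with $\cl_M(X_A \cap X_B) = \cl_M(X_A) \cap \cl_M(X_B)$ (using Propositions~\ref{proprcl} and~\ref{rXrclX} and the flat fact above), so \ref{M2} applies. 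If instead $A \cap B \le E$, then $A = X_A + e_A$, $B = X_B + e_B$ with distinct $e_A = \langle a + \epsilon\rangle$, $e_B = \langle b + \epsilon\rangle$, and $(A+B)\cap E = X_A + X_B + \langle a - b\rangle$. The genuinely new difficulty, which I expect to be the main obstacle, is the subcase $a - b \notin \cl_M(X_A + X_B)$: here $r_M$ jumps by one, a naive bound fails, and one must exclude the configuration $\delta_{X_A,e_A} = \delta_{X_B,e_B} = 0$ with the $r_M$-inequality tight. The resolution is that tightness places $\cl_M(X_A) + \cl_M(X_B)$ in \emph{both} $\mu(e_A)$ and $\mu(e_B)$ (via \ref{M1}), whereupon \ref{QM} forces $(\cl_M(X_A)+\cl_M(X_B)) + e_A = (\cl_M(X_A)+\cl_M(X_B)) + e_B$, i.e. $a - b \in \cl_M(X_A)+\cl_M(X_B) \le \cl_M(X_A + X_B)$, contradicting the subcase hypothesis; the complementary subcase $a - b \in \cl_M(X_A + X_B)$ collapses to the shared-direction computation.

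Finally, that $N_\mu | E = M$ is immediate since $r_\mu|_{\mathcal{L}(E)} = r_M$. For the displayed equality, the inclusion into $\mu(e)$ is immediate since $r_\mu(F+e)=r_\mu(F)$ forces $\delta_{F,e}=0$, i.e. $F = \cl_M(F) \in \mu(e)$ by Proposition~\ref{clFF}; the reverse inclusion additionally requires that $F + e$ be a flat of $N_\mu$ when $F \in \mu(e)$, which I check directly by testing every $x \in \qdiff{E'}{F+e}{q}$ and using that $F$ is a flat of $M$. For uniqueness I prove the characterization that for \emph{every} one-dimensional extension $N$ and all $X \le E$, $e$ new, one has $r_N(X+e) = r_M(X) + [\,\cl_M(X) \notin \mu_N(e)\,]$, where $[\,\cdot\,]$ is $1$ or $0$ according as the condition holds: by Proposition~\ref{proprcl} and \ref{q-mat_monotonicity} one has $r_N(X+e) = r_N(\cl_M(X)+e)$, and $r_N(\cl_M(X)+e) = r_M(\cl_M(X))$ precisely when $\cl_M(X) \in \mu_N(e)$, the forward direction using Theorem~\ref{cor_rest} to identify $\cl_N(\cl_M(X))$ with $\cl_M(X)+e$. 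Applying this with $\mu_N = \mu$ shows that any extension inducing $\mu$ has rank function $r_\mu$, giving uniqueness.
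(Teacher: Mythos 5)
Your proposal is correct and follows essentially the same route as the paper: well-definedness of $r_\mu$ via \ref{QM}, monotonicity via the nested-flats-of-equal-rank observation, submodularity split into the same three pair types with \ref{M2} handling the tight shared-direction case and \ref{M1} plus \ref{QM} ruling out the bad configuration for distinct new directions (the paper phrases this with an auxiliary direction $\tilde{e} \in \qdiff{(X+Y+e+e') \cap E}{X+Y}{q}$ and a case split on the $\delta$'s rather than your contradiction on $a-b \notin \cl_M(X_A+X_B)$, but the content is identical), and uniqueness by showing every extension's rank function is forced, exactly as in the paper's final paragraph. One cosmetic repair: in your distinct-direction subcase, \ref{M1} and \ref{QM} should be applied to the flat $\cl_M(X_A + X_B)$ rather than to $\cl_M(X_A) + \cl_M(X_B)$, which need not be a flat and so need not belong to a modular cut; since $\cl_M(X_A+X_B)$ contains both closures, your argument then goes through verbatim.
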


The proof of this theorem employs some lemmas.
We make a few observations on $r_{\mu}$ before going on to prove them.

\begin{rem}
  Although `$r_N$' in \cite[Theorem~7.2.3]{Oxley} is clearly well-defined. We need to check the well-definedness of $r_{\mu}$ here.
  The domain of the function $r_{\mu}$ above is indeed $\mathcal{L}(E')$
  since for all $Z \in \mathcal{L}(E')$, there exist $X\in \mathcal{L}(E)$ and $e\in [E']_q$ such that $Z = X + e$.
  Furthermore, if there also exist $Y \in \mathcal{L}(E)$ and $e'\in [E']_q$ such that $Z = Y + e'$,
  we obtain $X = Z \cap E = Y$. Hence, $\cl_M(X) + e' = \cl_M(X) + e$,
  which implies that the value $r_{\mu}(Z)$ is uniquely determined by \ref{QM}.
\end{rem}

\begin{lem}\label{modR1R2}
  $r_{\mu}$ in \textnormal{Theorem~\ref{thm_modular_cut_extension}} satisfies \ref{q-mat_boundedness} and \ref{q-mat_monotonicity}.
\end{lem}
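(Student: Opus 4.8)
The plan is to read the two axioms straight off the case split that is already built into the definition of $r_{\mu}$: whether the argument lies in $\mathcal{L}(E)$ or genuinely uses a new direction in $\qdiff{E'}{E}{q}$. Throughout, for $Z \not\le E$ I write $Z = X + e$ with $X = Z \cap E$ and $e \in \qdiff{Z}{E}{q}$, so that $\dim Z = \dim X + 1$; the remark preceding this lemma guarantees the resulting value $r_{\mu}(Z) = r_M(X) + \delta_{X,e}$ is independent of the chosen representative $e$.

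For boundedness \ref{q-mat_boundedness}, if $Z \le E$ then $r_{\mu}(Z) = r_M(Z)$ and the bound is inherited from $M$. If $Z \not\le E$, then $0 \le r_M(X) \le \dim X$ and $\delta_{X,e} \in \{0,1\}$ give $0 \le r_{\mu}(Z) \le \dim X + 1 = \dim Z$. This disposes of \ref{q-mat_boundedness} with essentially no work.

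For monotonicity \ref{q-mat_monotonicity}, I take $Z_1 \le Z_2$ and set $X_i = Z_i \cap E$, so $X_1 \le X_2$ and hence $r_M(X_1) \le r_M(X_2)$. The easy configurations reduce at once to monotonicity of $r_M$: if $Z_1 \le E$ then $Z_1 \le X_2$ and $r_{\mu}(Z_1) = r_M(Z_1) \le r_M(X_2) \le r_{\mu}(Z_2)$; and if both $Z_i \not\le E$ with $\delta_{X_1,e_1} = 0$, or with $r_M(X_1) + 1 \le r_M(X_2)$, the inequality is immediate. Writing $Z_i = X_i + e_i$ with $e_i \in \qdiff{Z_i}{E}{q}$, the only remaining case is $\delta_{X_1,e_1} = 1$ together with $r_M(X_1) = r_M(X_2)$.

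This borderline case is the crux and the step I expect to be the main obstacle. Here I would first argue that $r_M(X_1) = r_M(X_2)$ with $X_1 \le X_2$ forces $\cl_M(X_1) = \cl_M(X_2) =: F$: every $x \in [X_2]_q$ satisfies $r_M(X_1 + x) = r_M(X_1)$ by \ref{q-mat_monotonicity}, so $X_2 \le \cl_M(X_1)$, whence \ref{q-mat_cl_monotonicity} and \ref{q-mat_cl_idempotence} yield both inclusions. Next comes the linear-algebra observation that $F + e_1 = F + e_2$: since $X_i \le F$ we have $e_1 \le Z_1 \le Z_2 \le F + e_2$, so $F + e_1 \le F + e_2$, and both have dimension $\dim F + 1$ because $F \le E$ while $e_i \not\le E$. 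Finally I invoke \ref{QM} in contrapositive form: applying it to the pair $(e_2, e_1)$ shows that $F \in \mu(e_2)$ together with $F + e_2 = F + e_1$ would force $F \in \mu(e_1)$; but $\delta_{X_1,e_1} = 1$ means $F \notin \mu(e_1)$, so $F \notin \mu(e_2)$, i.e. $\delta_{X_2,e_2} = 1$. Then $r_{\mu}(Z_1) = r_M(X_1) + 1 = r_M(X_2) + 1 = r_{\mu}(Z_2)$, which completes \ref{q-mat_monotonicity}.
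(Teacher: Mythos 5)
Your proof is correct, and its skeleton (the case split on whether each space lies in $\mathcal{L}(E)$, with \ref{q-mat_boundedness} falling out immediately) matches the paper's; the difference is in how you close the borderline case $\delta_{X_1,e_1}=1$, $r_M(X_1)=r_M(X_2)$. The paper exploits the fact that $Z_1 \leq Z_2$ lets you choose a \emph{common} representative: any $e \in \qdiff{Z_1}{E}{q}$ satisfies $Z_1 = X_1 + e$ and $Z_2 = X_2 + e$, so both $\delta$'s refer to the same modular cut $\mu(e)$, and the contradiction is immediate from $\cl_M(X_1)=\cl_M(X_2)$ being simultaneously outside and inside $\mu(e)$ --- no appeal to \ref{QM} inside the proof. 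You instead allow distinct representatives $e_1, e_2$, prove $\cl_M(X_1)=\cl_M(X_2)=:F$, show $F+e_1 = F+e_2$ by the dimension count, and then invoke \ref{QM} in contrapositive to transfer $F \notin \mu(e_1)$ to $F \notin \mu(e_2)$. Both arguments are sound; yours is slightly longer because it effectively re-derives, in this special case, the well-definedness of $r_\mu$ under change of representative that the paper's preceding remark already records (and that you yourself cite at the outset --- had you used that remark to replace $e_2$ by $e_1$, your \ref{QM} step would have evaporated). A small credit on your side: you spell out why $r_M(X_1)=r_M(X_2)$ forces $\cl_M(X_1)=\cl_M(X_2)$ via \ref{q-mat_monotonicity}, \ref{q-mat_cl_monotonicity}, and \ref{q-mat_cl_idempotence}, a step the paper asserts without proof.
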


\begin{proof}
  Consider first \ref{q-mat_boundedness}.
  Let $X \in \mathcal{L}(E)$ and $e \in \qdiff{E'}{E}{q}$.
  It is clear that $0 \leq r_{\mu}(X) = r_M(X) \leq \dim X$ and
  \begin{align*}
    0 \leq r_{\mu}(X + e) \leq r_M(X) + 1 \leq \dim{X}+1 = \dim(X + e).
  \end{align*}
  Next, we confirm \ref{q-mat_monotonicity}. Suppose $X \leq Y \in \mathcal{L}(E')$.
  \ref{q-mat_monotonicity} is clear when $Y \in \mathcal{L}(E)$, and so we may assume that $Y \notin \mathcal{L}(E)$.
  By the definition of $r_{\mu}$, $r_{\mu}(Y)\geq r_M(Y \cap E)$ because $\dim Y = \dim (Y\cap E) + 1$. If $X \in \mathcal{L}(E)$, then we have
  \begin{equation*}
    r_{\mu}(Y) - r_{\mu}(X) \geq r_M(Y \cap E) - r_M(X) \geq 0.
  \end{equation*}
   If $X \notin \mathcal{L}(E)$, there exists $e \in \qdiff{E'}{E}{q}$ such that $(X \cap E) + e = X$ and $(Y \cap E) + e = Y$.
   Noting that $X \cap E \leq Y \cap E$ and $\cl_M(X \cap E) \leq \cl_M(Y \cap E)$, we have
  \begin{align*}
    r_{\mu}(Y) - r_{\mu}(X) &= r_M(Y \cap E) - r_M(X \cap E) + \delta_{Y \cap E, e} - \delta_{X \cap E, e}\\
                    &= r_M(\cl_M(Y \cap E)) - r_M(\cl_M(X \cap E)) + \delta_{Y \cap E, e} - \delta_{X \cap E, e}.
  \end{align*}
  We assume for contradiction that $r_{\mu}(Y)-r_{\mu}(X) < 0$.
  Then $r_M(\cl_M(X \cap E)) = r_M(\cl_M(Y \cap E))$ and $ (\delta_{X \cap E, e}, \delta_{Y \cap E, e}) = (1, 0)$.
  We have $\cl_M(X \cap E) \notin \mu(e)$ and $\cl_M(Y \cap E) \in \mu(e)$, which contradict the definition of $\cl_M$ because $\cl_M(X \cap E) = \cl_M(Y \cap E)$.
  Therefore we obtain $r_{\mu}(Y) \geq r_{\mu}(X)$.
\end{proof}

\begin{lem}\label{modR3}
  $r_{\mu}$ in \textnormal{Theorem \ref{thm_modular_cut_extension}} satisfies \ref{q-mat_submodularity}.
\end{lem}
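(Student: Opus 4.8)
The plan is to verify \ref{q-mat_submodularity}, that is $r_{\mu}(X + Y) + r_{\mu}(X \cap Y) \leq r_{\mu}(X) + r_{\mu}(Y)$ for all $X, Y \in \mathcal{L}(E')$, by a case analysis according to whether $X$ and $Y$ lie in $\mathcal{L}(E)$. Throughout I abbreviate $X_0 = X \cap E$ and $Y_0 = Y \cap E$ and write $\delta_{\bullet}$ for the correction terms; recall that every $Z \not\leq E$ equals $(Z \cap E) + e$ for some $e \in \qdiff{E'}{E}{q}$, that $r_{\mu}(Z) = r_M(Z \cap E) + \delta_{Z \cap E, e}$, and that this value is independent of the chosen $e$. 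The recurring tool is that, by \ref{q-mat_cl_monotonicity} and \ref{M1}, the map $A \mapsto \delta_{A, e}$ is order-reversing: if $A \leq B$ in $\mathcal{L}(E)$ then $\delta_{B, e} \leq \delta_{A, e}$. Combined with the submodularity \ref{q-mat_submodularity} of $r_M$, this handles the bookkeeping once the geometry of each case is understood.

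First, when $X, Y \leq E$ everything lies in $\mathcal{L}(E)$ and the inequality is exactly \ref{q-mat_submodularity} for $r_M$. When exactly one of them, say $X$, is not contained in $E$, then $X \cap Y = X_0 \cap Y$ lies in $E$ while $X + Y = (X_0 + Y) + e$ for the direction $e$ of $X$; the claim follows from submodularity of $r_M$ applied to $(X_0, Y)$ together with $\delta_{X_0 + Y, e} \leq \delta_{X_0, e}$. The first genuinely delicate case is $X, Y \not\leq E$ with $X \cap Y \not\leq E$: here $X, Y, X+Y, X \cap Y$ all share a common direction $e \leq X \cap Y$, and writing each space as $(\,\cdot \cap E) + e$ reduces the problem, after cancelling the submodular inequality for $r_M$ on $(X_0, Y_0)$, to $\delta_{X_0 + Y_0, e} + \delta_{X_0 \cap Y_0, e} \leq \delta_{X_0, e} + \delta_{Y_0, e}$. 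When $(X_0, Y_0)$ is not a modular pair the slack in $r_M$ absorbs the $\delta$'s; the remaining configuration, where $(X_0, Y_0)$ is a modular pair with $\delta_{X_0, e} = \delta_{Y_0, e} = 0$ but $\delta_{X_0 \cap Y_0, e} = 1$, is exactly what \ref{M2} excludes. To invoke it I will first use Proposition~\ref{proprcl}, \ref{q-mat_monotonicity}, and \ref{q-mat_submodularity} to show that $(\cl_M(X_0), \cl_M(Y_0))$ is again a modular pair with $\cl_M(X_0 \cap Y_0) = \cl_M(X_0) \cap \cl_M(Y_0)$, whence \ref{M2} forces $\cl_M(X_0 \cap Y_0) \in \mu(e)$, a contradiction. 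This mirrors the classical argument in \cite[Lemma 7.2.2]{Oxley}.

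The main obstacle, with no classical analogue, is the last case: $X, Y \not\leq E$ but $X \cap Y \leq E$, so that $X$ and $Y$ protrude from $E$ in distinct directions $e_X = \langle x \rangle$ and $e_Y = \langle y \rangle$. Then $X \cap Y = X_0 \cap Y_0 \leq E$, while a dimension count shows $W := (X + Y) \cap E = (X_0 + Y_0) + \langle w_0 \rangle$ with $w_0 = x - \lambda y \in E$ (the scalar $\lambda$ chosen so that $w_0 \in E$) and $w_0 \notin X_0 + Y_0$; moreover one may take $e_X$ as the direction of $X + Y$, so $r_{\mu}(X + Y) = r_M(W) + \delta_{W, e_X}$. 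The key elementary observation is that for a flat $F \leq E$ one has $F + e_X = F + e_Y$ if and only if $w_0 \in F$. Using $w_0 \in W \leq \cl_M(W)$ together with \ref{QM} gives $\delta_{W, e_X} = \delta_{W, e_Y}$, and the order-reversing property then yields $\delta_{W, e_X} \leq \min(\delta_{X_0, e_X}, \delta_{Y_0, e_Y})$. The only configuration in which the naive bounds $r_M(W) \leq r_M(X_0 + Y_0) + 1$ and submodularity of $r_M$ on $(X_0, Y_0)$ fail to close the inequality is $\delta_{X_0, e_X} = \delta_{Y_0, e_Y} = 0$ together with $r_M(W) = r_M(X_0 + Y_0) + 1$. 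To rule this out I will push $\cl_M(X_0)$ and $\cl_M(Y_0)$ upward to $C := \cl_M(X_0 + Y_0)$ by \ref{M1}, obtaining $C \in \mu(e_X) \cap \mu(e_Y)$; then \ref{QM} forces $C + e_X = C + e_Y$, i.e. $w_0 \in C$, so that in fact $r_M(W) = r_M(X_0 + Y_0)$ and the rank never jumps. I expect this final case — isolating the auxiliary element $w_0$ and extracting the constraint from \ref{QM} — to be the crux of the whole argument.
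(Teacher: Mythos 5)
Your proposal is correct and takes essentially the same route as the paper's proof: your case split by containment in $E$ is the paper's reduction to the pair shapes $(X+e,Y)$, $(X+e,Y+e)$, $(X+e,Y+e')$ (your dichotomy on whether $X \cap Y \leq E$ is exactly the paper's dimension-count reduction of the third shape to the second), your same-direction critical case is the Oxley-style modular-pair argument via \ref{M2}, and your auxiliary element $w_0$ is precisely the paper's $\tilde{e} \in \qdiff{(X+Y+e+e')\cap E}{X+Y}{q}$, with the dangerous subcase $\delta_{X_0,e_X}=\delta_{Y_0,e_Y}=0$ eliminated by \ref{M1} and \ref{QM} just as in the paper. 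Your unified bound $\delta_{W,e_X}=\delta_{W,e_Y}\leq\min(\delta_{X_0,e_X},\delta_{Y_0,e_Y})$ packages the paper's three-subcase $\delta$-analysis a bit more cleanly (and avoids the paper's superfluous claim that $\delta_{X+Y+\tilde{e},e}=1$), but the underlying mechanism is identical.
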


\begin{proof}
  Let $X, Y \in \mathcal{L}(E)$ and $e, e' \in \qdiff{E'}{E}{q}$.
  We only need to check that the inequality in \ref{q-mat_submodularity} holds for all pairs of subspaces having one of the forms $(X + e, Y)$, $(X + e, Y + e)$, and $(X + e, Y + e')$.

  In the cases $(X + e, Y)$ and $(X + e, Y + e)$, the property \ref{q-mat_submodularity}
  is similarly proved by considering a $q$-analogue of the proof in \cite[Theorem 7.2.3]{Oxley}, where we replace `$r_N$', `$\mathcal{M}$', `$\delta_X$', and `$\delta_Y$' in its proof with $r_\mu$, $\mu(e)$, $\delta_{X, e}$, and $\delta_{Y, e}$, respectively.
  \cite[Inequalities (7.2), (7.4), (7.5) and (7.6)]{Oxley} are obtained by \ref{q-mat_submodularity} for $r_M$ and both of the equations
  $(X + e) \cap Y = X \cap Y$ and $(X + e) \cap (Y + e) = X \cap Y + e$.

  We consider the case $(X + e, Y + e')$.
  It holds that
  \begin{equation*}
    \dim((X + e) \cap (Y + e')) \leq \dim(X \cap Y) + 1
  \end{equation*}
  by basic linear algebra.
  If $\dim((X + e) \cap (Y + e')) = \dim(X \cap Y) + 1$, there exists $e'' \in \qdiff{E'}{E}{q}$
  such that $(X + e) \cap (Y + e') = X \cap Y + e''$, and thus $X + e = X + e''$ and $ Y + e' = Y + e''$.
  So, we may assume $(X + e) \cap (Y + e') = X \cap Y$.
  This equation and \ref{q-mat_submodularity} for $r_M$ imply that
  \begin{align*}
          r_{\mu}(X + e) + r_{\mu}(Y + e') &=r_M(X) + \delta_{X, e} + r_M(Y) + \delta_{Y, e'} \\
                                   &\geq r_M(X + Y) + \delta_{X, e} + \delta_{Y, e'} + r_{\mu}((X + e) \cap (Y + e')).
  \end{align*}
  Moreover, there exists $\tilde{e} \in \qdiff{(X + Y + e + e') \cap E}{X+Y}{q}$ such that
  \begin{equation*}
      X + Y + e + \tilde{e} = X + Y + e' + \tilde{e} = X + Y + e + e',
  \end{equation*}
  since $\dim((X + Y + e + e') \cap E) = \dim(X + Y + e + e') - 1$.
  Thus, it suffices to show that
  \begin{equation*}
    r_M(X + Y) + \delta_{X, e} + \delta_{Y, e'} \geq r_{\mu}(X + Y + \tilde{e} + e).
  \end{equation*}
  If $\delta_{X, e} = \delta_{X, e'} = 0$, we have $\delta_{X + Y, e} = \delta_{X + Y, e'} = 0$ by \ref{M1}.
  Then, the property \ref{QM} implies that
  \begin{equation*}
    \cl_M(X + Y) + e = \cl_M(X + Y) + e' = \cl_M(X + Y) + e + e'.
  \end{equation*}
  So, we obtain
  \begin{equation*}
    r_{\mu}(X + Y) = r_{\mu}(X + Y + e) = r_{\mu}(X + Y + e + e').
  \end{equation*}
  If $\delta_{X, e} = \delta_{X, e'} = 1$, then $\delta_{X + Y + \tilde{e}, e} = 1$ by \ref{M1}. By Lemma~\ref{rXAB}\ref{AxA} and the definition of $r_{\mu}$,
  \begin{equation*}
    r_M(X + Y) + 2 \geq r_M(X + Y + \tilde{e}) + 1 \geq r_{\mu}(X + Y + \tilde{e} + e).
  \end{equation*}
  Without loss of generality, we may finally assume $\delta_{X, e} = 0$ and $\delta_{Y, e'} = 1$.
  Then, $\delta_{X + Y + \tilde{e}, e} = 0$ by \ref{M1}, and hence,
  \begin{equation*}
    r_M(X + Y) + 1 \geq r_M(X + Y + \tilde{e}) \geq r_{\mu}(X + Y + \tilde{e} + e).
  \end{equation*}
  Therefore, $r_{\mu}$ satisfies \ref{q-mat_submodularity} in all cases.
\end{proof}

\begin{proof}[Proof of Theorem~\ref{thm_modular_cut_extension}]
  $N_{\mu} = (E', r_{\mu})$ in the theorem is indeed a $q$-matroid by Lemmas~\ref{modR1R2} and \ref{modR3}, and its restriction to $E$ is equal to $M$.
  By Lemma~\ref{lem_QM}, we obtain the modular cut selector $\mu_{N_{\mu}}$ such that, for all $e \in \Delta_q(E', E)$,
  \begin{equation*}
    \mu_{N_{\mu}}(e) = \{F \in \mathcal{F}_M \mid F + e \in \mathcal{F}_{N_{\mu}} \textrm{ and } r_{\mu}(F + e) = r_{\mu}(F)\}.
  \end{equation*}
  Fix $e \in \qdiff{E'}{E}{q}$. By the definition of $r_{\mu}$,
  \begin{equation*}
    \mu(e) = \{F \in \mathcal{F}_M \mid \delta_{F, e} = 0\} = \{F \in \mathcal{F}_M \mid r_{\mu}(F + e) = r_{\mu}(F)\}.
  \end{equation*}
  Then, if a flat $F$ of $M$ satisfies $r_{\mu}(F + e) = r_{\mu}(F)$, we have $\cl_{N_{\mu}}(F) = F + e \in \mathcal{F}_{N_{\mu}}$ by Proposition~\ref{cor_rest}.
  This implies that
  \begin{equation*}
    \mu(e) = \{F \in \mathcal{F}_M \mid F + e \in \mathcal{F}_{N_{\mu}} \textnormal{ and } r_{\mu}(F + e) = r_{\mu}(F)\} = \mu_{N_{\mu}}(e).
  \end{equation*}

  If $N$ is a one-dimensional extension of $M$ and $\mu_N$ be a modular cut selector of $M$ defined in Lemma~\ref{lem_QM}, then $N = N_{\mu_N}$ by the definition of $r_{{\mu_N}}$.

  These two facts imply that the construction of modular cut selectors defined in Lemma~\ref{lem_QM} is the inverse of the construction of one-dimensional extensions defined in Theorem~\ref{thm_modular_cut_extension}.
  Hence, the uniqueness holds as required.
\end{proof}

\begin{cor}\label{cor_main}
  Let $M$ be a $q$-matroid.
  All the one-dimensional extensions of $M$ to $E'$ are obtained by applying the construction in Theorem~\textnormal{{\ref{thm_modular_cut_extension}}} to the collection of all modular cut selectors of $M$.
\end{cor}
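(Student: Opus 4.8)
The plan is to observe that this corollary is a direct consequence of the bijection already established in the proof of Theorem~\ref{thm_modular_cut_extension}, so the real work reduces to reading off the surjectivity of the assignment $\mu \mapsto N_{\mu}$.

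First I would fix an arbitrary one-dimensional extension $N = (E', r_N)$ of $M$ to $E'$ and invoke Lemma~\ref{lem_QM} to obtain the associated map $\mu_N \colon \qdiff{E'}{E}{q} \to \MC_M$. That lemma already guarantees that $\mu_N$ takes values in $\MC_M$ and satisfies \ref{QM}, hence $\mu_N$ is a genuine modular cut selector of $M$. The only remaining point is the identity $N = N_{\mu_N}$, that is, that feeding $\mu_N$ into the construction of Theorem~\ref{thm_modular_cut_extension} recovers $N$ itself. This is precisely the statement recorded at the end of the proof of Theorem~\ref{thm_modular_cut_extension}, so I would simply cite it: every one-dimensional extension is of the form $N_{\mu}$ for some $\mu \in \MCS_M$, namely $\mu = \mu_N$.

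For completeness, and to make the corollary a genuine exhaustive description, I would also note the reverse inclusion: by Theorem~\ref{thm_modular_cut_extension} every $N_{\mu}$ with $\mu \in \MCS_M$ is a one-dimensional extension of $M$ to $E'$. Combining the two directions, the collection $\{N_{\mu} \mid \mu \in \MCS_M\}$ coincides exactly with the set of all one-dimensional extensions of $M$ to $E'$, which is the assertion.

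The main obstacle lies not in the corollary itself but in the identity $N = N_{\mu_N}$ on which it rests. Were one to verify that identity from scratch instead of citing the theorem, the delicate step is checking $r_N = r_{\mu_N}$ on every subspace of the form $X + e$ with $X \in \mathcal{L}(E)$ and $e \in \qdiff{E'}{E}{q}$; concretely, one must show that $r_N(X + e) = r_M(X)$ precisely when $\cl_M(X) \in \mu_N(e)$, so that the defining value $\delta_{X, e}$ matches the rank jump of $N$ across $e$. I would reduce this to the flat $F = \cl_M(X)$, using $r_N(X) = r_M(X) = r_M(F) = r_N(F)$ together with the reduction $r_N(X + e) = r_N(\cl_N(X) + e)$ obtained from Proposition~\ref{proprcl} and the identity $\cl_M(X) = \cl_N(X) \cap E$ from Theorem~\ref{cor_rest}. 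Since exactly this verification already underlies Theorem~\ref{thm_modular_cut_extension}, the corollary follows with no additional technical content.
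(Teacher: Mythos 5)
Your proposal is correct and matches the paper's intent exactly: the paper gives no separate proof of Corollary~\ref{cor_main}, treating it as an immediate consequence of the bijection established in the proof of Theorem~\ref{thm_modular_cut_extension} (surjectivity via Lemma~\ref{lem_QM} and the identity $N = N_{\mu_N}$, injectivity via $\mu_{N_\mu} = \mu$), which is precisely the reading you give. Your supplementary sketch of verifying $N = N_{\mu_N}$ directly, reducing to $F = \cl_M(X)$ via Proposition~\ref{proprcl} and Theorem~\ref{cor_rest}, is also sound and mirrors the reasoning already embedded in the theorem's proof.
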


\begin{eg}\label{trivial_extension}
  Let $M=(E_M, r_M)$ be a $q$-matroid and $N$, $N=(E_N, r_N)$ be its one-dimensional extension.
  We also let $\mu$ be the modular cut selector determined by $N$ and $M$.
  It holds that $r_N(E_N) = r_M(E_M + e)$ for all $e \in \Delta(E_N, E_M)$. 
  Theorem~\ref{thm_modular_cut_extension} indicate $r_N(E_N) = r_M(E_M) + 1$ if and only if $E_M \notin \mu(e)$ and this leads $\mu(e) = \emptyset$ by \ref{M1}.
  Then, we call $N$ the \emph{trivial extension} of $M$ and $\mu$ be the \emph{trivial modular cut selector} of $M$.
\end{eg}

  \section{Classifications of $q$-Matroids}\label{Results}
  It is challenging problem to classify all the $q$-matroids for given dimension $n$ due to the size of general linear group $\GL(n, q)$. 
The problem is solved only for $q$-matroids $M$ of arbitrary $n$, its rank $r(M) \leq 1$ or $r(M) \geq n-1$, 
where the parameter $q$ does not affect such classification \cite[Theorem 58]{DSOM}.
Concretely, the numbers of non-isomorphic $n$-dimensional $q$-matroids of rank $0$ and $1$ are 1 and $n$, respectively.

In this section, we propose an enumeration algorithm for $q$-matroids.
As a result we classify the $q$-matroids of dimension up to $5$ over $\mathbb{F}_2$ and up to  $4$ over $\mathbb{F}_3$.

\subsection{Canonical Representatives}\label{Canonical}
One of the most simple ways to classify $q$-matroids is to generate all the $q$-matroids 
and remove the isomorphic ones by pairwise isomorphism testing.
However, this approach is quite inefficient when the number of the generated $q$-matroids are large. 
We define a canonical presentation of $q$-matroid, which is the representative of each isomorphism class of $q$-matroids.
This is a $q$-analogue of the notion called \emph{canonical matroids} in \cite{MEIG}, and save the trouble of the pairwise isomorphism testing.

We employ $\mathbb{F}_q^n\coloneqq \prod_{i=1}^n{\mathbb{F}_q}$, 
the $n$-dimensional subspace over $\mathbb{F}_q$.
Fix a sufficiently large $u$, and the $i$-dimensional space $U^{(i)}:=\{\bm{v}=(v_1, \dots v_u) \in \mathbb{F}_q^u \mid v_j=0\; \textrm{for all}\; j > i\}$ for each integer $i=0,\cdots,u$.
Then, we obtain the chain $\{(0, \dots, 0)\}=U^{(0)} \leq U^{(1)} \leq \dots \leq U^{(u)} = \mathbb{F}_q^u$.

\begin{dfn}
  Let $X$ be a totally orderd set and its relation be $<$. 
  We also let $n$ be an integer.
  For distinct $\bm{x} = (x_1, \dots, x_{n}), \bm{y} = (y_1, \dots, y_{n}) \in X^l$, 
  we write $\bm{x} < \bm{y}$
  if $x_n < y_n$ or if $x_n = y_n$ and $(x_1, \dots, x_{n-1}) < (x_1, \dots, x_{n-1})$, 
  which we call the \emph{reverse lexicographic order}.
\end{dfn}

\begin{dfn}
  Let $A$ be a matrix on $\mathbb{F}_q$. 
  If the transpose of $A$ is the reduced row echelon form, 
  we say $A$ is of the \emph{reverse echelon form}.
  For a subspace $X$ of $E$, we call the generator matrix $B$ of $X$ the \emph{reverse canonical representation} of $X$ 
  if the matrix $B$ is of the reverse echelon form.
\end{dfn}

We fix a total order $\leq$ on 
$\mathbb{F}_q = \{\alpha_0 = 0 < \alpha_1 = 1 < \alpha_2 < \dots < \alpha_{q-1}\}$.
The order $<$ determin the reverse lexicographic order in the vectors of $\mathbb{F}_q^u$.
Hence, distinct $n$ times $m$ matrices over $\mathbb{F}_q$ also have the reverse lexicographic order $<$.
For all subspaces in $E$, we define the order $\prec$ 
by the reverse rexicographic order in the reverse canonical representation of them.

\begin{eg}
  We fix $u=4$ and consider $U^{(2)}, U^{(3)}$ over $\mathbb{F}_2$. 
  Let $P$ be the first $\gaussbinom{3}{2}{2}=7$ members of $\gaussbinom{U^{(4)}}{2}{2}$: 
  \begin{equation*}
    \begin{amatrix}
      1000\\
      0100
    \end{amatrix}
    \prec
    \begin{amatrix}
      1000\\
      0010
    \end{amatrix}
    \prec
    \begin{amatrix}
      0100\\
      0010
    \end{amatrix}
    \prec
    \begin{amatrix}
      1100\\
      0010
    \end{amatrix}
    \prec
    \begin{amatrix}
      0100\\
      1010
    \end{amatrix}
    \prec
    \begin{amatrix}
      1100\\
      1010
    \end{amatrix}
    \prec
    \begin{amatrix}
      1000\\
      0110
    \end{amatrix}.
  \end{equation*}
  Then, $P=\gaussbinom{U^{(2)}}{2}{2}$ as sets. 
  Let $P'$ be the $\gaussbinom{3}{2}{2}=7$ members 
  from $(\gaussbinom{3}{3}{2}+1)$th 
  to $(\gaussbinom{3}{3}{2}+\gaussbinom{3}{2}{2})$th 
  of $\gaussbinom{U^{(4)}}{3}{2}$:
    \begin{equation*}
    \begin{amatrix}
      1000\\
      0100\\
      0001
    \end{amatrix}
    \prec
    \begin{amatrix}
      1000\\
      0010\\
      0001
    \end{amatrix}
    \prec
    \begin{amatrix}
      0100\\
      0010\\
      0001
    \end{amatrix}
    \prec
    \begin{amatrix}
      1100\\
      0010\\
      0001
    \end{amatrix}
    \prec
    \begin{amatrix}
      0100\\
      1010\\
      0001
    \end{amatrix}
    \prec
    \begin{amatrix}
      1100\\
      1010\\
      0001
    \end{amatrix}
    \prec
    \begin{amatrix}
      1000\\
      0110\\
      0001
    \end{amatrix}.
  \end{equation*}
  Then, $P'$ consists of the first seven $3$-dimensional subspaces of $E'$ not contained in $E$. 
  The mapping $P' \to P$ which removes the the last row $(0001)$ preserves the canonicality and the order.
\end{eg}

\begin{dfn}
  Let $M$ be a $q$-matroid of rank $k$ on the $n$-dimensional space $U^{(n)}$, and set $K=\gaussbinom{n}{k}{q}$.
  We define the \emph{encoding} $\enc(M) \coloneqq (c_1, \dots, c_K) \in  \{0, 1\}^K$ of $M$ as follows:
  for all $i=1, \dots, K$, 
  if the $i$-th element of the $k$-dimensional subspaces of $E$ is a basis of $M$, 
  then $c_i = 1$, otherwise $c_i = 0$.
\end{dfn}

Note that 
the cryptomorphisms of $q$-matroids \cite{CNQC} guarantee that 
a collection of bases uniquely determines a $q$-matroid. 
In other words, each encoding uniquely determines a $q$-matroid.

\begin{proof}
  It is clear from the definition of bases and the restriction.
\end{proof}

\begin{dfn}
  Let $M$ be a $q$-matroid on $U^{(n)}$. 
  If $\enc(M)$ is the smallest encodding among those of $q$-matroids isomorphic to $M$ in the lexicographic order, where $0 < 1$. 
  We say $M$ is the \textit{canonical representative} for the isomorphism class of $M$, or simply, $M$ is \emph{canonical} if there is no risk of confusion.
\end{dfn}

Using the following lemma, we show that the canonical representative of a $q$-matroid can be obtained from that of its one-dimensional extension. 
\begin{lem}\label{lem_base}
  Let $M$ be a $q$-matroid and $N$ be its one-dimensional extension. 
  If $N$ is not the trivial extension of $M$, the bases $\mathcal{B}_M$ of $M$ satisfies
  $$
  \mathcal{B}_{M} = \{B \in \mathcal{B}_N\mid B \leq E_M\}, 
  $$
  othwerwise,
  $$
  \mathcal{B}_{M} = \{B \cap E_M\mid B \in \mathcal{B}_N \}.
  $$
\end{lem}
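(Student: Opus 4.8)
The plan is to split into the two cases of the statement according to whether the extension raises the rank. Write $k \coloneqq r_M(E_M)$. By Example~\ref{trivial_extension}, $N$ is the trivial extension exactly when $r_N(E_N) = k+1$, and otherwise $r_N(E_N) = k$. Throughout I would use that $N|E_M = M$, so $r_N(Z) = r_M(Z)$ for every $Z \in \mathcal{L}(E_M)$, and that $E_M$ is a hyperplane of $E_N$.

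In the non-trivial case $r_N(E_N) = k$, and both inclusions of $\mathcal{B}_M = \{B \in \mathcal{B}_N \mid B \leq E_M\}$ are immediate. If $B \in \mathcal{B}_M$, then $B \leq E_M$, $\dim B = k$, and $r_N(B) = r_M(B) = k = r_N(E_N)$, so $B \in \mathcal{B}_N$. Conversely, if $B \in \mathcal{B}_N$ with $B \leq E_M$, then $\dim B = k$ and $r_M(B) = r_N(B) = k = r_M(E_M)$, so $B \in \mathcal{B}_M$.

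In the trivial case $r_N(E_N) = k+1$. For the inclusion $\{B \cap E_M \mid B \in \mathcal{B}_N\} \subseteq \mathcal{B}_M$, take $B \in \mathcal{B}_N$, so $\dim B = k+1$ and $r_N(B) = k+1$. Then $B \not\leq E_M$, since $B \leq E_M$ would force $r_M(B) = k+1 > k = r_M(E_M)$, contradicting \ref{q-mat_monotonicity}. As $E_M$ is a hyperplane, $\dim(B \cap E_M) = k$, and writing $B = (B \cap E_M) + e$ with $e \in \qdiff{E_N}{E_M}{q}$, Lemma~\ref{rXAB}\ref{AxA} gives $k+1 = r_N(B) \leq r_N(B \cap E_M) + 1$, whence $r_N(B \cap E_M) \geq k$; combined with \ref{q-mat_boundedness} this yields $r_N(B \cap E_M) = k$. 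Therefore $r_M(B \cap E_M) = k = r_M(E_M) = \dim(B \cap E_M)$, so $B \cap E_M \in \mathcal{B}_M$.

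The main obstacle is the reverse inclusion $\mathcal{B}_M \subseteq \{B \cap E_M \mid B \in \mathcal{B}_N\}$ in the trivial case: given a basis $B_0$ of $M$ I must produce a basis $B$ of $N$ with $B \cap E_M = B_0$. Here $r_N(B_0) = r_M(B_0) = k$, and I claim there exists $e \in \qdiff{E_N}{E_M}{q}$ with $r_N(B_0 + e) = k+1$. Suppose not, so $r_N(B_0 + e) = r_N(B_0)$ for every such $e$. Applying Lemma~\ref{rXAB}\ref{XABX} repeatedly over the finitely many $e \in \qdiff{E_N}{E_M}{q}$ gives $r_N(B_0 + S) = k$, where $S$ is the sum of all one-dimensional subspaces not contained in $E_M$. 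A short linear-algebra computation shows $S = E_N$: fixing a vector $w$ outside $E_M$, the lines $\langle w \rangle$ and $\langle v + w \rangle$ for $v \in E_M$ already span $E_N$. Then $r_N(E_N) = k$, contradicting $r_N(E_N) = k+1$. Hence such an $e$ exists; setting $B \coloneqq B_0 + e$ gives $\dim B = k+1$ (since $B_0 \leq E_M$ but $e \not\leq E_M$) and $r_N(B) = k+1 = r_N(E_N)$, so $B \in \mathcal{B}_N$, while $B \cap E_M = B_0$ because $B_0 \leq B \cap E_M$ and both have dimension $k$. This completes the argument.
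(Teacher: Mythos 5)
Your proof is correct. Note that the paper itself offers essentially no proof here --- its entire justification is ``It is clear from the definition of bases and the restriction'' --- so your write-up is a fleshed-out version of the same definitional approach rather than a different route. The non-trivial case and the inclusion $\{B \cap E_M \mid B \in \mathcal{B}_N\} \subseteq \mathcal{B}_M$ in the trivial case are indeed the immediate parts the authors presumably had in mind. The one step that genuinely requires an argument, and which the paper glosses over, is the reverse inclusion in the trivial case: given $B_0 \in \mathcal{B}_M$ you must exhibit $e \in \qdiff{E_N}{E_M}{q}$ with $r_N(B_0 + e) = k+1$. Your argument for this is sound: if $r_N(B_0 + e) = r_N(B_0)$ for every such $e$, then iterating Lemma~\ref{rXAB}(3) over the finitely many lines outside $E_M$ gives $r_N(B_0 + S) = k$, and since the lines $\langle w \rangle$, $\langle v + w \rangle$ ($v \in E_M$, $w \notin E_M$) span $E_N$, this forces $r_N(E_N) = k$, contradicting $r_N(E_N) = k+1$; monotonicity \ref{q-mat_monotonicity} then pins $r_N(B_0+e)$ to exactly $k+1$ for some $e$, and $B = B_0 + e$ satisfies $B \cap E_M = B_0$ by the dimension count against the hyperplane $E_M$. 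In short, your proposal supplies exactly the missing content behind the paper's one-line assertion, and every step checks out against the paper's lemmas (Lemma~\ref{rXAB} and Example~\ref{trivial_extension} for the rank dichotomy characterizing the trivial extension).
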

\begin{proof}
  It is clear from the definition of bases and the restriction.
\end{proof}

\begin{thm}
  Let $M$ be a $q$-matroid of rank $k$ on the ground space $U^{(n)}$. 
  Let $N$ be its one-dimensional extension to $U^{(n+1)}$. 
  When $N$ is the canonical, the followings hold:
  \begin{enumerate}[label=\textnormal{(\arabic*)}]
    \item If $N$ is not the trivial extension of $M$, 
    then $M$ is encoded as the first $\gaussbinom{n}{k}{q}$ symbols of $\enc(N)$. 
    Furthermore, $M$ is also canonical.\label{thm_canonical_1}
    \item If $N$ is the trivial extension of $M$, then $M$ is encoded as the $\gaussbinom{n}{k}{q}$ symbols 
    from $(\gaussbinom{n}{k+1}{q}+1)$-th to $(\gaussbinom{n}{k+1}{q}+\gaussbinom{n}{k}{q})$-th of $\enc(N)$. 
    Furthermore, $M$ is also canonical.\label{thm_canonical_2}
  \end{enumerate}
\end{thm}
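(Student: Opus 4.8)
The plan is to reduce the theorem to two combinatorial facts about the order $\prec$ together with the basis description in Lemma~\ref{lem_base}, and then to deduce canonicality of $M$ by a descent argument against the canonicality of $N$. Throughout, write $E_M = U^{(n)}$ and $E_N = U^{(n+1)}$, and recall that $\enc(\cdot)$ lists the dimension-$r$ subspaces of the ground space (where $r$ is the rank) in increasing $\prec$-order, placing a $1$ exactly at the bases. The only two ordering facts I need, both read off from the Example preceding the theorem, are: (a) for every $d$, the first $\gaussbinom{n}{d}{q}$ members of $\gaussbinom{E_N}{d}{q}$ in $\prec$-order are exactly the elements of $\gaussbinom{E_M}{d}{q}$, in the same relative order as in $\enc$ of a matroid on $E_M$; and (b) the members of $\gaussbinom{E_N}{k+1}{q}$ occupying positions $\gaussbinom{n}{k+1}{q}+1,\dots,\gaussbinom{n}{k+1}{q}+\gaussbinom{n}{k}{q}$ are precisely the first $\gaussbinom{n}{k}{q}$ subspaces of $E_N$ of dimension $k+1$ not contained in $E_M$, and deleting the final row $(0\cdots 0\,1)$ of the reverse canonical representation carries them order-isomorphically onto $\gaussbinom{E_M}{k}{q}$.

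For part~\ref{thm_canonical_1}, $N$ is a non-trivial extension, so $r(N)=r(M)=k$ and $\enc(N)$ indexes $\gaussbinom{E_N}{k}{q}$. By fact~(a) with $d=k$, the first $\gaussbinom{n}{k}{q}$ symbols of $\enc(N)$ record, for each $B\in\gaussbinom{E_M}{k}{q}$, whether $B\in\mathcal{B}_N$; since $B\le E_M$, the non-trivial case of Lemma~\ref{lem_base} gives $B\in\mathcal{B}_N\iff B\in\mathcal{B}_M$, so these symbols agree with $\enc(M)$ entry by entry. For part~\ref{thm_canonical_2}, $N$ is the trivial extension, so $r(N)=k+1$ and $\enc(N)$ indexes $\gaussbinom{E_N}{k+1}{q}$. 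Any $(k+1)$-dimensional $B\le E_M$ has $r_N(B)=r_M(B)\le k$, hence is never a basis of $N$; thus the first $\gaussbinom{n}{k+1}{q}$ symbols (the subspaces contained in $E_M$, by fact~(a) with $d=k+1$) are all $0$, and the next $\gaussbinom{n}{k}{q}$ symbols are those of fact~(b). For such a $B$, writing $B=(B\cap E_M)+e$ with $e=\langle 0\cdots0\,1\rangle\in\qdiff{E_N}{E_M}{q}$, triviality of the extension (Example~\ref{trivial_extension}, i.e.\ Theorem~\ref{thm_modular_cut_extension} with $\mu(e)=\emptyset$) yields $r_N(B)=r_M(B\cap E_M)+1$, so $B\in\mathcal{B}_N\iff B\cap E_M\in\mathcal{B}_M$; combined with the order-isomorphism of fact~(b), this block equals $\enc(M)$.

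It remains to establish canonicality of $M$, which I would do uniformly by descent. Suppose $M$ is not canonical, so there is $M^\ast\cong M$ on $U^{(n)}$ with $\enc(M^\ast)<\enc(M)$ lexicographically. Applying the proposition on extensions of isomorphic $q$-matroids from Section~\ref{Preliminaries} to $M^\ast\cong M$, with $E_M\le U^{(n+1)}$ on both sides and the extension $N$ of $M$, produces an extension $N^\ast$ of $M^\ast$ to $U^{(n+1)}$ with $N^\ast\cong N$. Because isomorphisms preserve rank, $N^\ast$ is non-trivial (resp.\ trivial) exactly when $N$ is, so the prefix (resp.\ block) description above applies verbatim to $(N^\ast,M^\ast)$. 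Consequently $\enc(N^\ast)$ and $\enc(N)$ have identical structure outside the relevant block and agree on every position before the first place where $\enc(M^\ast)$ and $\enc(M)$ differ, where $N^\ast$ carries a $0$ against a $1$ in $N$. Hence $\enc(N^\ast)<\enc(N)$, contradicting the canonicality of $N$; therefore $M$ is canonical.

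The main obstacle is proving the ordering facts~(a) and~(b) rigorously from the definitions of the reverse echelon form and the reverse lexicographic order: one must show that a subspace lies in $E_M$ iff its reverse canonical representation is zero in the last coordinate, that these are precisely the $\prec$-smallest ones of their dimension, and that deleting the pivot row $(0\cdots0\,1)$ is an order-preserving bijection onto the lowest $(k+1)$-subspaces not contained in $E_M$. Once these are in hand, the symbol-matching via Lemma~\ref{lem_base} and the descent argument are routine.
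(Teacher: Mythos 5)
Your proposal follows essentially the same route as the paper's own proof: you match $\enc(M)$ against the appropriate block of $\enc(N)$ using Lemma~\ref{lem_base} together with the order-preserving ``delete the last row'' correspondence illustrated in the example (your facts (a) and (b), which the paper likewise asserts from the definition of $\prec$ without a detailed proof), and you establish canonicality of $M$ by the identical descent argument, lifting a smaller-encoded $M^\ast \cong M$ via the extension proposition of Section~\ref{Preliminaries} to an $N^\ast \cong N$ with $\enc(N^\ast) < \enc(N)$. If anything, you are more careful than the paper at a few points --- verifying $B \in \mathcal{B}_N \Leftrightarrow B \cap E_M \in \mathcal{B}_M$ from the trivial-extension rank formula of Example~\ref{trivial_extension}, noting that the leading $\gaussbinom{n}{k+1}{q}$ symbols vanish so the descent's lexicographic comparison localizes to the correct block, and using the correct dimensions $k+1$ and $k$ where the paper's write-up of part (2) has an off-by-one slip.
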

\begin{proof}
  We shall show (\ref{thm_canonical_1}). 
  By Lemma \ref{lem_base} and the definition of $\prec$, the bases of $M$
  are exactly the bases of $N$ and encoded as the first $\gaussbinom{n}{k}{q}$ symbols of $\enc(N)$.
  If $M$ is not the canonical, 
  there exists a $q$-matroid $M'$ isomorphic to $M$ whose encoding is smaller than that of $M$. 
  Further, there exists an extension $N'$ of $M'$ which is isomorphic to $N$.
  Hence, $\enc(N')$ is smaller than $\enc(N)$, 
  which contradicts the assumption that $N$ is the canonical.

  The statement (\ref{thm_canonical_2}) can be shown in the same way.
  We consider the $k$-dimensional subspaces in the ground space of $N$ 
  from $(\gaussbinom{n}{k+1}{q}+1)$-th 
  to 
  $(\gaussbinom{n}{k+1}{q}+\gaussbinom{n}{k}{q})$-th.
  We consider the reverse canonical representaion of these subspaces.
  When we delete each last row of these matrices,
  we have all the reverse canonical representations of $(k-1)$-dimensional subspaces of $E_M$ preserving the order $\prec$.
  By Lemma \ref{lem_base} and the definition of $\prec$, the bases of $M$
  are exactly those of $N$ and encoded as the $\gaussbinom{n}{k}{q}$ symbols 
  from $(\gaussbinom{n}{k+1}{q}+1)$-th to $(\gaussbinom{n}{k+1}{q}+\gaussbinom{n}{k}{q})$-th of $\enc(N)$.
  By the same argument as above, $M$ is also canonical.
\end{proof}
\begin{lem}
  Let $M$ be a $q$-matroid. If $M$ is canonical,
  then the trivial extension of $M$ is also canonical.
\end{lem}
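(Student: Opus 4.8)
The plan is to argue by contradiction, reducing everything to the preceding theorem. Assume $M$ is canonical of rank $k$ on $U^{(n)}$, let $N$ be its trivial extension to $U^{(n+1)}$ (of rank $k+1$), and suppose $N$ is \emph{not} canonical. Let $\tilde{N}$ denote the canonical representative of the isomorphism class of $N$, so that $\enc(\tilde N) < \enc(N)$ lexicographically and in particular $\tilde N \neq N$. The goal is to show that $\tilde N$ is forced to coincide with $N$, which is the desired contradiction.

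First I would inspect the leading symbols of $\enc(N)$. By the definition of $\prec$ (as in the example preceding the theorem), the first $\gaussbinom{n}{k+1}{q}$ subspaces are exactly the $(k+1)$-dimensional subspaces of $U^{(n)}$, and none of these is a basis of the trivial extension, since $r_N(B) = r_M(B) \leq k$ whenever $B \leq U^{(n)}$. Hence $\enc(N)$ begins with $\gaussbinom{n}{k+1}{q}$ zeros, and because $\enc(\tilde N) \leq \enc(N)$, the same initial block of $\enc(\tilde N)$ must be zero as well. Consequently no $(k+1)$-dimensional subspace of $U^{(n)}$ is a basis of $\tilde N$; were $r_{\tilde N}(U^{(n)}) = k+1$, the restriction $\tilde N|U^{(n)}$ would contain a $(k+1)$-dimensional basis, a contradiction, so $r_{\tilde N}(U^{(n)}) \leq k$, while Lemma~\ref{rXAB}\ref{AxA} gives $r_{\tilde N}(U^{(n)}) \geq r_{\tilde N}(U^{(n+1)}) - 1 = k$. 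Thus $r_{\tilde N}(U^{(n)}) = k$, and setting $\tilde M := \tilde N|U^{(n)}$ the characterization of the trivial extension in Example~\ref{trivial_extension} (namely $r_{\tilde N}(U^{(n+1)}) = r_{\tilde M}(U^{(n)}) + 1$) shows that $\tilde N$ is the trivial extension of $\tilde M$ in standard position. Statement~(2) of the preceding theorem then applies and yields that $\tilde M$ is canonical.

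The heart of the argument is to prove $\tilde M \cong M$. Let $h$ realize $\tilde N \cong N$, put $W := h(U^{(n)})$, a hyperplane with $r_N(W) = r_{\tilde N}(U^{(n)}) = k$, and note that restricting $h$ gives $\tilde M \cong N|W$. It therefore suffices to show $N|W \cong M$ for any hyperplane $W$ with $r_N(W)=k$. If $W = U^{(n)}$ this is trivial; otherwise set $V := W \cap U^{(n)}$ of dimension $n-1$, and a short computation using the rank formula of the trivial extension gives $r_M(V) = k-1$. The characterization in Example~\ref{trivial_extension} then identifies both $M$ and $N|W$ as the trivial extensions of $M|V$ to $U^{(n)}$ and to $W$ respectively. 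Since these two spaces share the codimension-one subspace $V$, the linear isomorphism fixing $V$ and sending a spanning vector of $U^{(n)}$ to one of $W$ is rank-preserving on the two trivial extensions (the value of each rank function depends only on the intersection with $V$ and on whether the argument lies in $V$), so it induces an isomorphism $M \cong N|W$, and hence $\tilde M \cong M$.

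Finally, $\tilde M$ and $M$ are both canonical and isomorphic, so by uniqueness of the canonical representative within an isomorphism class we get $\tilde M = M$. Because the trivial extension of a $q$-matroid to $U^{(n+1)}$ is unique by Theorem~\ref{thm_modular_cut_extension}, it follows that $\tilde N = N$, contradicting $\enc(\tilde N) < \enc(N)$. I expect the main obstacle to be the sub-claim in the third paragraph that $N|W \cong M$ for every hyperplane $W$ of rank $k$: the delicate point is to verify that both restrictions really are trivial extensions of the common matroid $M|V$ and to exhibit the rank-preserving isomorphism between them; the rest of the proof is bookkeeping on the encoding and direct appeals to the results already established.
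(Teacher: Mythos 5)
Your proof is correct, and it follows the same overall skeleton as the paper's three-sentence argument --- assume a smaller-encoded isomorphic copy of the trivial extension exists, pass to its restriction to $U^{(n)}$, and contradict the canonicality of $M$ --- but you do genuinely more work at exactly the point where the paper is glib. The paper simply asserts that ``the restriction $M'$ of $N'$ to $E$ is isomorphic to $M$,'' which is not automatic: restriction to a fixed hyperplane is not an isomorphism invariant in general. Your third paragraph supplies the missing lemma, namely that $N|W \cong M$ for \emph{every} hyperplane $W$ with $r_N(W)=k$, proved by identifying both $M$ and $N|W$ as trivial extensions of the common restriction $M|V$ (using the rank characterization of triviality from Example~\ref{trivial_extension}, which is valid since $E_M \notin \mu(e)$ forces $\mu(e)=\emptyset$ by \ref{M1}) and transporting along a linear map fixing $V$ --- a computation that checks out, since the trivial extension's rank at $Z$ depends only on $Z \cap V$ and on whether $Z \leq V$. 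Your endgame also differs slightly: the paper concludes by claiming $\enc(M') < \enc(M)$, which strictly speaking fails in the degenerate case where $\enc(N')$ and $\enc(N)$ agree through the second block (then $\enc(M')=\enc(M)$, and one must instead note $M'=M$ forces $N'=N$ by uniqueness of the trivial extension); your route through statement (2) of the preceding theorem, uniqueness of the canonical representative within an isomorphism class, and uniqueness of the trivial extension handles this case automatically. In short, the paper's proof buys brevity by assuming the restriction-isomorphism step; yours buys a self-contained argument that closes both that gap and the edge case, at the cost of length. One cosmetic note: the justification that $r_{\tilde N}(U^{(n)}) \leq k$ implies the existence claim about bases should cite that rank is witnessed by an independent subspace of full rank inside $U^{(n)}$, which is standard from the cryptomorphisms of \cite{CNQC} but worth a pointer.
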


\begin{proof}
  Let $M$ be the $q$-matroid $M$ of rank $k$ on $U^{(n)}$. 
  We also let $N$ be the trivial extension of $M$.
  The first $\gaussbinom{n}{k+1}{q}$ symbols of $\enc(N)$ are all $0$,
  and the following $\gaussbinom{n}{k}{q}$ symbols are the same as the encoding of $M$.
  We assume $N$ is not canonical, there exists a $q$-matroid $N'$ isomorphic to $N$ whose encoding is smaller than that of $N$.
  But then, the restriction $M'$ of $N'$ to $E$ is isomorphic to $M$ and its encoding is smaller than that of $M$, 
  which contradicts the assumption that $M$ is canonical.
\end{proof}

\subsection{Algorithm}
Let $M$ be a $q$-matroid.
For any modular cut selector $\mu$ of $M$, we say the $q$-matroid $N_{\mu}$ in Theorem~\ref{thm_modular_cut_extension} is \textit{induced} by $\mu$ from $M$.
We enumerate all the one-dimensional extensions to $E'$ from modular cut selectors.
For the purpose, we first construct all the modular cut selectors of $M$.
To find all the members of $\MC_M$, it suffices to consider the minimal members of each $\mathcal{M} \in \MC_M$ because we obviously have
\begin{equation*}
  \mathcal{M} = \{F \in \mathcal{F}_M \mid \text{there exists } F' \in m(\mathcal{M}) \textrm{ such that } F' \leq F \},
\end{equation*}
where
\begin{equation*}
  m(\mathcal{M})\coloneqq\{F \in \mathcal{M} \mid \text{for all } F' \in \mathcal{F}_M, \textrm{ if } F \leq F' \textrm{, then } F=F'\}.
\end{equation*}
We notice that $m(\mathcal{M})$ forms an \emph{anti-chain}, \textit{i.e.},
any two different elements in $m(\mathcal{M})$ is not in inclusion relation.
Conversely, given an anti-chain $\mathcal{A}$ whose members are flats of $M$, the subcollection of flats
\begin{equation*}
  \{F \in \mathcal{F}_M \mid \text{there exists } F' \in \mathcal{A} \textrm{ such that } F' \leq F \}
\end{equation*}
satisfies \ref{M1}.
Consequently, we can find $\mathcal{M}$ as a subset obtained from such anti-chains and satisfying \ref{M2}.

\begin{breakablealgorithm}
  \caption{The modular cuts of a $q$-matroid}
    \begin{algorithmic}
      \INPUT $q$-matroid $M$.
      \OUTPUT $\MC_M$.
      \State {Set $\mathcal{D}$ as an empty set.}
      \For {Anti-chain $\mathcal{A}$ in flats $\mathcal{F}_M$}
        \State {$\mathcal{M} \leftarrow \{F \in \mathcal{F}_M : \textrm{there exists $F' \in \mathcal{A}$ such that $F' \leq F$}\}$}
        \If {$\mathcal{M}$ satisfies \ref{M2}}
          \State {Add $\mathcal{M}$ to $\MC_M$.}
        \EndIf
      \EndFor
      \State {\Return $\MC_M$}.
    \end{algorithmic}
\end{breakablealgorithm}

To construct a modular cut selector $\mu$, 
for each $e \in \qdiff{E'}{E}{q}$, 
we select a member of $\MC_M$ as $\mu(e)$ so as not to violate \ref{QM}. 

Let $F$ be a flat of $M$ and $\qdiff{E'}{E}{q}= \{e_0 \prec e_1 \prec \cdots \prec e_{end}\}$.
We call $\mu$ \emph{admissible} in $\mu(e_0), \cdots, \mu(e_{i-1})$ if the next statement holds:
for all integers $0 \leq j < i$ and $F \in \mathcal{M}$, $e_j \leq F + e_i$ if and only if $ F \in \mu(e_j)$.
By the definition, 
a function $\mu$ is a modular cut selector of $M$ if and only if 
$\mu$ is admissible in $\mu(e_0), \cdots, \mu(e_{i-1})$ for each $i$.

Hence, we construct a function $\mu \colon \qdiff{E'}{E}{q} \to \MC_M$ by setting the $i$-th value $\mu(e_i)$
as a member $\mathcal{M}$ of $\MC_M$ in order.
Then, assigning the last value $\mu(e_{end})$ means $\mu$ satisfies \ref{QM}.

We summarize the discussion above and introduce the way of modular cuts enumeration in Algorithm~\ref{MODALG}.

\begin{breakablealgorithm}
    \caption{The Modular cut selectors of a $q$-matroid}\label{MODALG}
    \begin{algorithmic}
      \INPUT $\MC_M$.
      \OUTPUT $\MCS_M$.
      \Function{Modular-Cut-Selector}{$\MCS_M, \mathcal{M}, i$}
      \For {$\mathcal{M}$ in $\MC_M$}
          \If {$\mathcal{M}$ is admissible in $\mu(e_0), \dots, \mu(e_{i-1})$}
            \State $\mu(e_i) \gets \mathcal{M}$
            \If {$e_i = e_{end}$}
              \State Add $\mu$ to $\MCS_M$
            \Else
              \State $\MCS_M \gets \MCS_M \cup \Call{Modular-Cut-Selector}{\MCS_M, \mu, i+1}$
            \EndIf
          \EndIf
        \EndFor
        \State \Return $\MCS_M$
      \EndFunction
      \State Initialize $\MCS_M$ as an empty set.
      \State Define $\mu$ as a function on $\qdiff{E'}{E}{q}$, where $\mu(e) = -1$ for $e \in \qdiff{E'}{E}{q}$.
      \State \Comment{-1 means $\mu(e)$ has not been assigned.}
      \State \Return \Call{Modular-Cuts}{$\MCS_M, \mu, 0$}
    \end{algorithmic}
\end{breakablealgorithm}

By Example~\ref{trivial_extension}, 
the rank of a one-dimensional extension is uniquely determined by its modular cuts.
This facts enable us to classify $q$-matroids not only by dimension but also by rank.

Now, we obtain the classification method as in Algorithm~\ref{CLAALG}.
Let $\qMat(n, k)$ be a set of the canonical representatives of $q$-matroids of rank $k$ on $U^{(n)}$.

\begin{breakablealgorithm}
    \caption{Enumeration of non-isomorphic $q$-matroid}\label{CLAALG}
    \begin{algorithmic}
      \INPUT $\qMat(n-1, k)$ and $\qMat(n-1, k-1)$.
      \OUTPUT $\qMat(n, k)$.
      \For {$M$ in $\qMat(n-1, k)$}
        \For {non-trivial modular cut selector $\mu$ of $M$}
          \State {$N \leftarrow$ $q$-matroid induced by $\mu$.}
          \If {$N$ is the canonical}
            \State {{Add $N$ to $\qMat(n, k)$.}}
          \EndIf
        \EndFor
      \EndFor
      \For {$M$ in $\qMat(n-1, k-1)$}
        \State {Add trivial extension of $M$ to $\qMat(n, k)$.}
      \EndFor
      \State {\Return $\qMat(n, k)$.}
    \end{algorithmic}
\end{breakablealgorithm}

\subsection{Results}
We note that Proposition~\ref{iso_dual} guarantee that, in order to classify $q$-matroids,
it suffices to consider non-isomorphic $q$-matroids of dimension $n$ and rank $k \leq \lfloor \frac{n+1}{2} \rfloor$.

Using open source mathematics software system ``SageMath'' on processor ``Intel Core i9-13980HX'' (2.20 GHz),
we implemented our algorithms and classified $q$-matroids of dimension $4, 5$ over $\mathbb{F}_2$,
and those of dimension $4$ over $\mathbb{F}_3$. 
We summarize the results as follows.
\begin{thm}
  Over $\mathbb{F}_2$,
  there exist $10$ non-isomorphic $q$-matroids of dimension $4$ and
  $94$ non-isomorphic $q$-matroids of dimension $5$ for rank $2$.

  Over $\mathbb{F}_3$, there exist at most $31$ non-isomorphic $q$-matroids of dimension $4$ and rank $2$.
\end{thm}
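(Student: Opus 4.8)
The plan is to establish this by running the enumeration procedure of Algorithm~\ref{CLAALG} inductively on the dimension, starting from the dimensions in which $q$-matroids are already completely understood, and reading off the cardinalities of the relevant sets $\qMat(n,k)$. The soundness of the count rests on two ingredients proved above. First, Corollary~\ref{cor_main} guarantees that every one-dimensional extension of a given $q$-matroid $M$ arises as $N_\mu$ for some modular cut selector $\mu$, so that looping over $\MCS_M$ (built by Algorithm~\ref{MODALG}) produces every extension with no omission. Second, the canonical-representative machinery of Section~\ref{Canonical} ensures that keeping only those $N$ whose encoding $\enc(N)$ is canonical leaves exactly one representative per isomorphism class, so that no pairwise isomorphism testing is needed and no class is counted twice.

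For the base of the induction I would invoke \cite[Theorem 58]{DSOM}, which classifies all $q$-matroids of rank $\leq 1$ (and, dually, of rank $\geq n-1$) independently of $q$, together with the complete classification up to dimension $3$ recorded in the appendix of \cite{DSOM}; this supplies the input sets $\qMat(3,k)$. Because isomorphism classes are preserved under duality by Proposition~\ref{iso_dual}, it suffices to enumerate ranks $k \leq \lfloor (n+1)/2 \rfloor$ and recover the remaining ranks by dualizing, exactly the reduction noted before the statement. In particular the reported counts are those of the rank-$2$ slices: the other ranks in dimension $4$ contribute $1,\,4,\,4,\,1$ for ranks $0,1,3,4$, so the numbers $10$ and $94$ genuinely record $|\qMat(4,2)|$ and $|\qMat(5,2)|$.

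The inductive step is the repeated application of Algorithm~\ref{CLAALG}: each $M \in \qMat(n-1,k)$ contributes its non-trivial one-dimensional extensions, computed from the non-trivial modular cut selectors, while each $M \in \qMat(n-1,k-1)$ contributes its trivial extension, and in both cases only canonical outputs are retained. Concretely, over $\mathbb{F}_2$ one runs this step once to pass from dimension $3$ to dimension $4$, yielding $|\qMat(4,2)| = 10$, and once more to pass from dimension $4$ to dimension $5$, yielding $|\qMat(5,2)| = 94$; over $\mathbb{F}_3$ the analogous step from dimension $3$ to dimension $4$ produces the rank-$2$ count. Each of these is a finite computation executed by the SageMath implementation described above, and its completeness is guaranteed by Corollary~\ref{cor_main}.

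The delicate point, and the reason for the weaker \emph{at most} in the $\mathbb{F}_3$ statement, is certifying that the canonicality filter is both sound and complete in practice: one must verify that whenever an extension is discarded an isomorphic canonical copy has already been produced, and that the smallest encoding realized by the algorithm is the true minimum over the full $\GL(n,q)$-orbit. For $\mathbb{F}_2$ the orbit computation under $\GL(n,2)$ is feasible and the canonical encodings can be certified, so the counts $10$ and $94$ are exact. Over $\mathbb{F}_3$ the size of $\GL(4,3)$ makes an exhaustive orbit verification costly, so the algorithm may fail to collapse some isomorphic pairs; each surviving encoding therefore still corresponds to at least one class but possibly fewer than $31$ distinct classes, which is precisely why only the upper bound can be asserted. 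The main obstacle in completing the argument rigorously is thus this isomorphism-reduction step, not the extension construction.
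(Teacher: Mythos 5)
Your proposal is correct and coincides with the paper's own justification: the paper gives no separate proof of this theorem, obtaining the counts exactly as you describe by running Algorithm~\ref{CLAALG} in a SageMath implementation, with completeness of the extension step guaranteed by Corollary~\ref{cor_main}, deduplication by the canonical representatives of Section~\ref{Canonical}, the rank reduction by Proposition~\ref{iso_dual}, and base cases taken from \cite{DSOM}. The only caveat is that your explanation of the \emph{at most} over $\mathbb{F}_3$ --- incomplete orbit certification under $\GL(4,3)$ --- is a plausible reconstruction but is nowhere stated in the paper, which reports that bound without comment.
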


\begin{table}[H]
  \centering
    \caption{The number of $q$-matroids on $\mathbb{F}_2$} \label{tbl:q-mat_F2}
    \begin{tabular}{r|cccccc}
      $k \backslash n$ & $1$ & $2$ & $3$ & $4$  & 5               & $\cdots$\\ \hline
      $0$ &   $1(\cite{DSOM})$   &  1(\cite{DSOM})  &  1(\cite{DSOM})  &  1(\cite{DSOM})   & 1(\cite{DSOM})               & -\\
      $1$ &   1(\cite{DSOM})   &  2(\cite{DSOM})  &  3(\cite{DSOM})  &  4(\cite{DSOM})   & 5(\cite{DSOM})       & -\\
      $2$ &       &  1(\cite{DSOM})  &  3(\cite{DSOM})  &  $\bf{10}$ & $\bf{94}$ & -\\
      $3$ &       &     &  1(\cite{DSOM})  &  4(\cite{DSOM})   & $\bf{94}$ & -\\
      $4$ &       &     &     &  1(\cite{DSOM})   & 5(\cite{DSOM})       & -\\
      $5$ &       &     &     &      & 1(\cite{DSOM})               & -\\
    \end{tabular}
\end{table}
\begin{table}[H]
  \centering
    \caption{The number of $q$-matroids on $\mathbb{F}_3$} \label{tbl:q-mat_F3}
    \begin{tabular}{r|ccccc}
      $k \backslash n$ & $1$ & $2$ & $3$ & $4$             &  $\cdots$\\ \hline
      $0$ &   1(\cite{DSOM})   &  1(\cite{DSOM})  &  1(\cite{DSOM})  &  1(\cite{DSOM})              &    -\\
      $1$ &   1(\cite{DSOM})   &  2(\cite{DSOM})  &  3(\cite{DSOM})  &  4(\cite{DSOM})              & - \\
      $2$ &       &  1(\cite{DSOM})  &  3(\cite{DSOM})  &  $\bf{31}$ &  -\\
      $3$ &       &     &  1(\cite{DSOM})  &  4(\cite{DSOM})              &  -\\
      $4$ &       &     &     &  1(\cite{DSOM})              &  -\\
      $5$ &       &     &     &                 &  -\\
    \end{tabular}
\end{table} 
  \section{Application}\label{q-FanoPlane}
  
\begin{dfn}
  Let $\mathcal{H}$ be a collection of the $k$-subspaces of an $n$-dimensional vector space $E$ over $\mathbb{F}_q$. 
  If each $t$-subspace of $E$ is contained in exactly one member of $\mathcal{H}$, 
  then $(E, \mathcal{H})$ is called a $q$-\emph{Steiner system} with parameters $S_q(t, k, n)$. 
  Further, We call $\mathcal{H}$ the \emph{blocks} of $(E, \mathcal{H})$.
\end{dfn}
One of the most famous open problems in $q$-analogue of combinatorics is the existence problem of $S_q(2, 3, 7)$ called the $q$-\emph{Fano plane}.

Residual $q$-Fano planes are the substructures of putative $q$-Fano plane $\mathcal{S}=(E, \mathcal{H})$ and there are two different definitions. 
One can be regarded as the blocks obtained by deleting last cordinate from the blocks of $q$-Fano plane defined in \cite{RFPRS}.
The explicit construction of such blocks is shown in the paper. 
However, it has not been revealed whether these blocks could be extended to $q$-Fano plane. 
The other can be regarded as the blocks of $q$-Fano plane contained in a fixed hyperplane \cite{DRSD}.

We use latter definition because related to the restriction of $q$-matroids due to the following fact.
\begin{thm}[\cite{CNMD}]\label{q-PMD}
  Let $\mathcal{S}=(E, \mathcal{H})$ be a $q$-Steiner system with parameters $S_q(t, k, n)$.
  We define $\mathcal{F}_{\mathcal{S}}$ as
  \begin{align*}
    \mathcal{F}_{\mathcal{S}} := \left\{\bigcap \mathcal{A} \mid \mathcal{A} \subseteq \mathcal{H}\right\},
  \end{align*}where the intersection of the empty set is $E$.
  Then $\mathcal{F}_{\mathcal{S}}$ is a flat of a $n$-dimensional $q$-matroid of rank $t+1$.
  Furthermore, for each integer $0 \leq i < t$, the collection of rank $i$-flats is the $i$-dimensional subspaces of $E$, 
  the collcetion of rank $t$-flats is precisely $\mathcal{H}$, 
  and the unique rank $(t+1)$-flat is $E$.
\end{thm}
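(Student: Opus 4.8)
The plan is to verify that $\mathcal{F}_{\mathcal{S}}$ satisfies the flat axioms (F1), (F2), (F3) and then invoke the cryptomorphism between flats and rank functions \cite{CNQC} to obtain a unique $q$-matroid whose flats are exactly $\mathcal{F}_{\mathcal{S}}$; the rank assertions then follow by reading off chain lengths in the flat lattice. Axioms (F1) and (F2) are immediate from the definition of $\mathcal{F}_{\mathcal{S}}$: the space $E$ is the empty intersection, and if $F_1 = \bigcap \mathcal{A}_1$ and $F_2 = \bigcap \mathcal{A}_2$ with $\mathcal{A}_1, \mathcal{A}_2 \subseteq \mathcal{H}$, then $F_1 \cap F_2 = \bigcap(\mathcal{A}_1 \cup \mathcal{A}_2) \in \mathcal{F}_{\mathcal{S}}$.

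The crux is to pin down $\mathcal{F}_{\mathcal{S}}$ explicitly, and I claim $\mathcal{F}_{\mathcal{S}} = \{W \in \mathcal{L}(E) \mid \dim W \leq t-1\} \cup \mathcal{H} \cup \{E\}$. First, the Steiner property forces two distinct blocks to intersect in dimension at most $t-1$: an intersection of dimension $\geq t$ would contain a $t$-dimensional subspace lying in two distinct blocks, contradicting uniqueness. Hence any intersection of two or more distinct blocks lands in $\{\dim \leq t-1\}$, while single blocks contribute $\mathcal{H}$ and the empty intersection contributes $E$. For the reverse inclusion I would prove the \emph{key lemma} that every $W$ with $\dim W = i \leq t-1$ equals $\bigcap\{H \in \mathcal{H} \mid W \leq H\}$. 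The tool is the standard regularity count: writing $\lambda_d$ for the number of blocks through a fixed $d$-dimensional subspace, double counting the $t$-subspaces above it gives $\gaussbinom{n-d}{t-d}{q} = \lambda_d \gaussbinom{k-d}{t-d}{q}$, so $\lambda_d$ depends only on $d$. If some $v \in \qdiff{E}{W}{q}$ lay in every block through $W$, then the blocks through $W$ would coincide with those through $W + v$, forcing $\lambda_i = \lambda_{i+1}$; but the displayed identity yields $\lambda_i / \lambda_{i+1} = (q^{n-i}-1)/(q^{k-i}-1) > 1$ whenever $n > k$, a contradiction. Thus for each $v \notin W$ some block through $W$ omits $v$, giving $\bigcap\{H \mid W \leq H\} = W$; and since $W$ extends to a $t$-subspace lying in a block, the family is nonempty.

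With this description, (F3) becomes a short case check; note that $e \in \qdiff{E}{F}{q}$ forces $F \neq E$. If $\dim F = i \leq t-2$, the cover containing $e$ is $F + e$, again a flat of dimension $i+1 \leq t-1$. If $\dim F = t-1$, the cover is the unique block containing the $t$-subspace $F + e$, furnished by the Steiner property. If $F \in \mathcal{H}$, the only flat strictly above it is $E$, since no block properly contains another of the same dimension $k$, so $E$ is the unique cover and it contains $e$. In each case uniqueness follows because any covering flat through $e$ must contain the exhibited one, which already covers $F$.

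Finally, applying the cryptomorphism produces the $q$-matroid, and the rank of each flat equals the length of a maximal chain of flats below it. The chain $0 \lessdot \langle x_1 \rangle \lessdot \cdots$ through subspaces of dimension up to $t-1$ shows an $i$-dimensional flat has rank $i$ for $i < t$; extending through a block shows each block has rank $t$; and adjoining $E$ gives rank $t+1$, so $r(E) = t+1$ as asserted. The main obstacle is the key lemma of the second paragraph, namely that every subspace of dimension at most $t-1$ is an intersection of blocks, since everything else is either definitional or a direct consequence of the explicit form of $\mathcal{F}_{\mathcal{S}}$.
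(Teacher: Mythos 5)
The paper contains no proof of this theorem to compare against: it is imported verbatim from \cite{CNMD}, where it arises from the theory of perfect matroid designs. Your argument is a correct, self-contained proof and in substance reconstructs that cited argument. The crux is exactly your key lemma: every $W$ with $\dim W \leq t-1$ is the intersection of the blocks containing it. Your derivation checks out: the double count gives $\lambda_d = \gaussbinom{n-d}{t-d}{q} \big/ \gaussbinom{k-d}{t-d}{q}$, and the $q$-binomial recursion yields $\lambda_i/\lambda_{i+1} = (q^{n-i}-1)/(q^{k-i}-1) > 1$, so no $v \notin W$ can lie in all blocks through $W$; this pins down $\mathcal{F}_{\mathcal{S}} = \{W \mid \dim W \leq t-1\} \cup \mathcal{H} \cup \{E\}$, after which (F1)--(F3) and the rank computation go through as you describe. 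Three points should be made explicit rather than left tacit. First, the ratio argument requires $n > k$ (and $t \geq 1$), i.e.\ the nondegeneracy $t < k < n$; if $n = k$ then $\mathcal{H} = \{E\}$ and both the lemma and the theorem's conclusion degenerate, so state this hypothesis. Second, in the case $\dim F = t-1$ of (F3), uniqueness needs the observation that $E$ does \emph{not} cover $F$, because the unique block $H \geq F + e$ lies strictly between; your remark that any covering flat through $e$ contains the exhibited one does cover this, since the only flats containing the $t$-space $F+e$ are $H$ and $E$, both of which contain $H$ --- but this deserves a sentence. Third, your final step identifies the rank of a flat with the common length of maximal chains of flats beneath it (equivalently, that covers in the flat lattice raise rank by exactly one); this is not something you prove, and it is genuinely part of the flats cryptomorphism of \cite{CNQC}, so it should be cited there explicitly rather than asserted. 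With those attributions and the nondegeneracy hypothesis added, the proof is complete.
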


We assume $\mathcal{S}$ is a putative $q$-Fano plane on a $7$-dimensional vector space $E$ over $\mathbb{F}_2$ 
and $M_{\mathcal{S}}$ is a $q$-matroid on $E$ in Theorem~\ref{q-PMD}.
If we fix a hyperplane $E_6$ of $E$, 
the 3-dimensional rank 2-flat of $M_{\mathcal{S}}|E_6$ is a residual $q$-Fano plane. 

Let $E_5$ be a $5$-dimensional subspace of $E$. 
In \cite{INSD}, it is demonstrated that
each block of $\mathcal{S}$ intersects in $256$, $120$, $5$ times with $E_5$
in $1$, $2$, $3$-dimensional subspaces respectively.
In addition, these $2$-dimensional subspaces are different because, 
if not, there are two blocks intersecting in a $2$-dimensional subspace, 
which contradicts the definition of $q$-Steiner system.
By Theorem~\ref{cor_rest} and definition of restriction, 
these $120+5$ intersection spaces are the rank $2$ flats of $M_{\mathcal{S}}|E_5$, and $M_{\mathcal{S}}|E_5$ is of rank $3$.

As a result of classification of $q$-matroids over $\mathbb{F}_2$, 
such 5-dimensional $q$-matroids are limited to following $10$ up to isomorphism.
We list the 3-dimensional rank 2 flats of them (their 2-dimensional rank 2 flats are the all 2-subspaces not contained in any 3-dimensional rank 2 spaces), 
the number of their intersection points, 
and the automorphism over $\GL(5, 2)$ of them.
\begin{equation*}
\end{equation*}
\begin{enumerate}
  \item
      \begin{align*}
        \left\{
        \begin{amatrix}10000\\01000\\00001\end{amatrix},
        \begin{amatrix}00100\\00010\\00001\end{amatrix},
        \begin{amatrix}10010\\01100\\00001\end{amatrix},
        \begin{amatrix}10110\\01010\\00001\end{amatrix},
        \begin{amatrix}10100\\01110\\00001\end{amatrix}
        \right\}
      \end{align*}
    \begin{itemize}
      \item Automorphism order: $5760$
      \item Intersection points: 1
    \end{itemize}

  \item
      \begin{align*}
        \left\{
        \begin{amatrix}10100\\01000\\00010\end{amatrix},
        \begin{amatrix}10000\\01000\\00001\end{amatrix},
        \begin{amatrix}00100\\00010\\00001\end{amatrix},
        \begin{amatrix}10001\\01010\\00100\end{amatrix},
        \begin{amatrix}10000\\01011\\00110\end{amatrix}
        \right\}
      \end{align*}
    \begin{itemize}
      \item Automorphism order: $5$
      \item Intersection points: 10
    \end{itemize}

  \item
      \begin{align*}
        \left\{
        \begin{amatrix}10100\\01000\\00010\end{amatrix},
        \begin{amatrix}10000\\01000\\00001\end{amatrix},
        \begin{amatrix}00100\\00010\\00001\end{amatrix},
        \begin{amatrix}10001\\01010\\00100\end{amatrix},
        \begin{amatrix}10010\\01011\\00110\end{amatrix}
        \right\}
      \end{align*}
    \begin{itemize}
      \item Automorphism order: $8$
      \item Intersection points: 10
    \end{itemize}

  \item
    \begin{align*}
      \left\{
      \begin{amatrix}10100\\01000\\00010\end{amatrix},
      \begin{amatrix}10000\\01000\\00001\end{amatrix},
      \begin{amatrix}00100\\00010\\00001\end{amatrix},
      \begin{amatrix}10010\\01011\\00100\end{amatrix},
      \begin{amatrix}10000\\01011\\00111\end{amatrix}
      \right\}
    \end{align*}
    \begin{itemize}
      \item Automorphism order: $120$
      \item Intersection points: 10
    \end{itemize}

  \item
    \begin{align*}
      \left\{
      \begin{amatrix}10100\\01000\\00010\end{amatrix},
      \begin{amatrix}10000\\01000\\00001\end{amatrix},
      \begin{amatrix}00100\\00010\\00001\end{amatrix},
      \begin{amatrix}10001\\01010\\00100\end{amatrix},
      \begin{amatrix}10010\\01001\\00110\end{amatrix}
      \right\}
    \end{align*}
    \begin{itemize}
      \item Automorphism order: $3$
      \item Intersection points: 10
    \end{itemize}

  \item
    \begin{align*}
      \left\{
      \begin{amatrix}10100\\01000\\00010\end{amatrix},
      \begin{amatrix}10000\\01000\\00001\end{amatrix},
      \begin{amatrix}00100\\00010\\00001\end{amatrix},
      \begin{amatrix}10001\\01010\\00100\end{amatrix},
      \begin{amatrix}10011\\01011\\00111\end{amatrix}
      \right\}
    \end{align*}
    \begin{itemize}
      \item Automorphism order: $12$
      \item Intersection points: 10
    \end{itemize}

  \item
    \begin{align*}
      \left\{
      \begin{amatrix}10100\\01000\\00010\end{amatrix},
      \begin{amatrix}10000\\01000\\00001\end{amatrix},
      \begin{amatrix}00100\\00010\\00001\end{amatrix},
      \begin{amatrix}10010\\01100\\00001\end{amatrix},
      \begin{amatrix}10011\\01010\\00100\end{amatrix}
      \right\}
    \end{align*}
    \begin{itemize}
      \item Automorphism order: $2$
      \item Intersection points: 8
    \end{itemize}

  \item
    \begin{align*}
      \left\{
      \begin{amatrix}10100\\01000\\00010\end{amatrix},
      \begin{amatrix}10000\\01000\\00001\end{amatrix},
      \begin{amatrix}00100\\00010\\00001\end{amatrix},
      \begin{amatrix}10010\\01100\\00001\end{amatrix},
      \begin{amatrix}10101\\01100\\00010\end{amatrix}
      \right\}
    \end{align*}
    \begin{itemize}
      \item Automorphism order: $8$
      \item Intersection points: 6
    \end{itemize}

  \item
    \begin{align*}
      \left\{
      \begin{amatrix}10100\\01000\\00010\end{amatrix},
      \begin{amatrix}10000\\01000\\00001\end{amatrix},
      \begin{amatrix}00100\\00010\\00001\end{amatrix},
      \begin{amatrix}10010\\01100\\00001\end{amatrix},
      \begin{amatrix}10001\\01010\\00100\end{amatrix}
      \right\}
    \end{align*}
    \begin{itemize}
      \item Automorphism order: $2$
      \item Intersection points: 8
    \end{itemize}

  \item
    \begin{align*}
      \left\{
      \begin{amatrix}10100\\01000\\00010\end{amatrix},
      \begin{amatrix}10000\\01000\\00001\end{amatrix},
      \begin{amatrix}00100\\00010\\00001\end{amatrix},
      \begin{amatrix}10010\\01100\\00001\end{amatrix},
      \begin{amatrix}10100\\01110\\00001\end{amatrix}
      \right\}
    \end{align*}
    \begin{itemize}
      \item Automorphism order: $48$
      \item Intersection points: 5
    \end{itemize}
\end{enumerate}

\begin{thm}
  Assume that $\mathcal{S}$ is a putative $q$-Fano plane $S_2(2, 3, 7)$.
  Then the $q$-matroid $M_{\mathcal{S}}$ in Theorem\ref{q-PMD} is an extension of a $q$-matroid
  which is isomorphic to at least one of $(1) \dots (10)$, 
  that is, 
  none of the extensions of the other $84$ representatives of rank $3$ and dimension $5$ are isomorphic to $M_{\mathcal{S}}$.
\end{thm}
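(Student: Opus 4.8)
The plan is to reduce both assertions to a single structural claim: for \emph{every} $5$-dimensional subspace $E_5 \leq E$, the restriction $M_{\mathcal{S}}|E_5$ is isomorphic to one of $(1),\dots,(10)$. Granting this, the theorem follows quickly. For the first assertion, fix any $E_5$; since $\dim E = 7$, the $q$-matroid $M_{\mathcal{S}}$ is an extension of $M_{\mathcal{S}}|E_5$ in the sense of Definition~\ref{def:q-mat_extension}, and as $M_{\mathcal{S}}|E_5$ is isomorphic to one of the ten, $M_{\mathcal{S}}$ is an extension of a $q$-matroid isomorphic to one of $(1),\dots,(10)$. For the second assertion, suppose toward a contradiction that $N$ is an extension of a $q$-matroid $M'$ isomorphic to one of the remaining $84$ representatives and that $N \cong M_{\mathcal{S}}$. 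Transporting the $5$-dimensional ground space of $M'$ through the isomorphism $N \cong M_{\mathcal{S}}$ exhibits $M'$ as isomorphic to a restriction $M_{\mathcal{S}}|E_5$, which by the structural claim is one of the ten; this contradicts the $84$ and the $10$ being disjoint isomorphism classes.

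To prove the structural claim, I would first read off the flats of $M_{\mathcal{S}}|E_5$ via Theorem~\ref{cor_rest}, which gives $\mathcal{F}_{M_{\mathcal{S}}|E_5} = \{F \cap E_5 \mid F \in \mathcal{F}_{M_{\mathcal{S}}}\}$. By Theorem~\ref{q-PMD} the flats of $M_{\mathcal{S}}$ are $0$, all $1$-dimensional subspaces, the $381$ blocks (the rank-$2$ flats), and $E$ itself. Intersecting each with $E_5$ and using the intersection numbers of \cite{INSD} — that for every $5$-dimensional $E_5$ exactly $256$, $120$, and $5$ blocks meet $E_5$ in dimension $1$, $2$, $3$ respectively — I would conclude that $M_{\mathcal{S}}|E_5$ is a rank-$3$ $q$-matroid on $E_5$ whose $3$-dimensional rank-$2$ flats are exactly the $5$ blocks contained in $E_5$, whose $2$-dimensional rank-$2$ flats are exactly the $2$-dimensional subspaces lying in no such block, and which is loopless. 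The step requiring care is the claim about $2$-dimensional subspaces: each $2$-dimensional subspace of $E_5$ lies in a unique block of $\mathcal{S}$, so its closure in $M_{\mathcal{S}}$ is that block, whence it has rank $2$ and is a flat of the restriction precisely when its block is not contained in $E_5$.

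Next I would note that the five distinguished $3$-dimensional subspaces pairwise intersect in dimension at most $1$, directly from the defining property of the $q$-Steiner system: two distinct blocks cannot share a $2$-dimensional subspace, as that subspace would then lie in two blocks. Thus $M_{\mathcal{S}}|E_5$ is the rank-$3$ ``paving'' $q$-matroid determined by a configuration of five $3$-dimensional subspaces of $\mathbb{F}_2^5$ meeting pairwise in dimension at most $1$; it is automatically a $q$-matroid, being a restriction of $M_{\mathcal{S}}$. The final step invokes the classification of Section~\ref{Results}: among the $94$ non-isomorphic rank-$3$, dimension-$5$ $q$-matroids over $\mathbb{F}_2$ recorded in Table~\ref{tbl:q-mat_F2}, those having exactly five $3$-dimensional rank-$2$ flats of this paving type are precisely the ten displayed as $(1),\dots,(10)$, so $M_{\mathcal{S}}|E_5$ is isomorphic to one of them.

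The main obstacle is this last step, whose correctness rests entirely on the exhaustiveness and isomorphism-freeness of the computational classification (Algorithm~\ref{CLAALG} together with the canonical-representative machinery) and on the finite verification that filtering the $94$ classes by the invariant ``the number of $3$-dimensional rank-$2$ flats equals $5$'' returns exactly the ten displayed configurations. One could in principle make this self-contained by enumerating, up to $\GL(5,2)$, all configurations of five $3$-dimensional subspaces of $\mathbb{F}_2^5$ that pairwise meet in dimension at most $1$ and checking that there are ten isomorphism types; but this is the same finite computation, and it is the one point where the argument depends on the classification results rather than on general $q$-matroid theory.
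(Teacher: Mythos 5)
Your proposal is correct and takes essentially the same route as the paper, whose argument is exactly the discussion preceding the theorem: the intersection numbers $256/120/5$ from \cite{INSD}, distinctness of the $2$-dimensional intersections via the $q$-Steiner property, identification of the $120+5$ rank-$2$ flats of $M_{\mathcal{S}}|E_5$ through Theorem~\ref{cor_rest}, and the computational filtering of the $94$ classified rank-$3$, dimension-$5$ $q$-matroids over $\mathbb{F}_2$ down to the ten listed. Your only addition is to spell out the (correct) transport-of-structure step that deduces both assertions of the theorem from the structural claim about restrictions to $5$-dimensional subspaces, a step the paper leaves implicit.
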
 
  \section*{Acknowledgements}
   This work was supported by the Japan Society for the Promotion of Science (JSPS) KAKENHI Grants JP21J22027, JP25K17298 and the Japan Science and Technology Agency (JST) BOOST Grants JPMJBS2408.


\begin{thebibliography}{20}
	\bibitem{LSAM12} A.~Betten, D.~Betten, Linear spaces with at most 12 points, J. Comb. Des. 7 (1999) 119--145.

	\bibitem{CCG} J.E.~Blackburn, H.H.~Crapo, D.A.~Higgs, A catalogue of combinatorial geometries, Math. Comp. 27 (1973) 155--166.

	\bibitem{GRSMA} M.~Barakat, R.~Behrends, C.~Jefferson, L.~K\"{u}hn, On the generation of rank 3 simple matroids with an application to Terao's freeness conjecture, SIAM J. Discrete Math. 35 (2021) 1201--1223.

	\bibitem{CNMD} E.~Byrne, M.~Ceria, S.~Ionica, R.~Jurrius, Constructions of new matroids and desingns over $\mathbb{F}_q$, Des. Codes Cryptogr. 91 (2023) 451--473.

	\bibitem{CNQC} E.~Byrne, M.~Ceria, R.~Jurrius, Constructions of new $q$-cryptomorphism, J. Comb. Theory, Ser. B 153 (2022) 149--194.

	\bibitem{RFPRS} T.~Etzion, N.~Hooker, Residual $q$-Fano planes and related structures, Electron. J. Comb. 25(2) (2018) 2.3.

	\bibitem{INSD} M.~Kiermaier, M.O.~Pavčević, Intersection Numbers For Subspace Designs, J. Comb. Des. 23(11) (2015) 463--480.

	\bibitem{DRSD} M. Kiermaier and R. Laue, Derived and residual subspace designs, Advances in Mathematics of Communications. 9 (2015), 105--115.

	\bibitem{DSOM} M.~Ceria, R.~Jurrius, The direct sum of $q$-matroids, J. Algebr. Comb. 59(2) (2024) 291--330.

	\bibitem{SEEM} H.H.~Crapo, Single-element extensions of matroids, J. Res. Nat. Bur. Stand. 69B (1965) 55--65.

	\bibitem{CrapoPhd} H.H.~Crapo, On the theory of combinatorial independence, Ph.D. thesis, Massachusetts Institute of Technology, Department of Mathematics, 1964.

	\bibitem{RMCQP} E.~Gorla, R.~Jurrius, H.~López, A.~Ravagnani, Rank-metric codes and $q$-polymatroids, J. Algebr. Comb. 52 (2020) 1--19.

	\bibitem{DQM} R.~Jurrius, R.~Pellikaan, Defining the $q$-analogue of a matroid, Electron. J. Comb. 25(3) (2018) 3.2.

	\bibitem{MEIG} Y.~Matsumoto, S.~Moriyama, H.~Imai, D.~Bremner, Matroid enumeration for incidence geometry, Discrete Comput. Geom. 47 (2012) 17--43.

	\bibitem{MNE} D.~Mayhew, G.F.~Royle, Matroids with nine elements, J. Comb. Theory Ser. B 98 (2008) 415--431.

	\bibitem{Oxley} J.~Oxley, Matroid Theory, second ed., Oxford University Press, Oxford, 2011.

	\bibitem{ORDERLY} R.C.~Reed, Every one a winner or how to avoid isomorphism search when cataloguing combinatorial
	configurations, Ann. Discrete Math. 2 (1978) 107--120.

	\bibitem{Seymour} P.D.~Seymour, On the points-lines-planes conjecture, J. Comb. Theory Ser. B 33 (1982) 17--26.

	\bibitem{Stanley} R.P.~Stanley, Enumerative Combinatrics, Vol. 1, second ed., Cambridge studies in advanced mathematics, 2011.

	\bibitem{Welsh} D.J.A.~Welsh, Matroid Theory, Academic Press, London, 1976.

\end{thebibliography}
\end{document}